\documentclass[12pt]{amsart}

\usepackage[utf8]{inputenc}
\usepackage[T1]{fontenc}
\usepackage{amsmath,amsfonts,amssymb,mathtools}
\usepackage{microtype}
\usepackage{xcolor}
\usepackage{hyperref}
\usepackage{enumitem}
\usepackage{biblatex}
\usepackage[margin=1in]{geometry}

\usepackage{tikz}
\usepackage[dvipsnames]{xcolor}

\DeclareFieldFormat[article]{title}{\mkbibemph{#1}}
\DeclareFieldFormat[article]{journaltitle}{#1}
\renewbibmacro{in:}{}

\newtheorem{theorem}{Theorem}[section]
\newtheorem{proposition}[theorem]{Proposition}
\newtheorem{lemma}[theorem]{Lemma}
\newtheorem{corollary}[theorem]{Corollary}

\theoremstyle{definition}
\newtheorem{definition}[theorem]{Definition}

\newtheorem{problem}[theorem]{Problem}
\theoremstyle{remark}
\newtheorem{remark}[theorem]{Remark}

\DeclareMathOperator{\conv}{conv}
\DeclareMathOperator{\supp}{supp}
\DeclareMathOperator{\relint}{relint}
\newcommand{\R}{\mathbb R}

\newcommand{\F}{\mathcal F}

\newcommand{\eps}{\varepsilon}

\title[Pushed Colorful Polytopal KKMS]{Pushed Colorful Polytopal KKMS Theorem and orthogonal star transversals}

\author[L. Mart\'inez-Sandoval]{Leonardo Mart\'inez-Sandoval}
\address[L. Mart\'inez-Sandoval]{Facultad de Ciencias, UNAM, Ciudad de M\'exico, M\'exico}
\email{leomtz@ciencias.unam.mx}

\author[A. Olvera-Sampieri]{Ariadna Olvera-Sampieri}
\address[A. Olvera-Sampieri]{Facultad de Ciencias, UNAM, Ciudad de M\'exico, M\'exico}
\email{arisam@ciencias.unam.mx}

\date{October 2026}

\begin{document}

\begin{abstract}

A classical theorem of Kaiser and Rabinovich implies that if every subfamily of size at most $\lfloor\log_2(d+1)\rfloor+1$ of a finite family of compact connected sets in $\mathbb{R}^d$ has a common axis-parallel hyperplane transversal, then the entire family has an orthogonal star transversal, i.e., a union of $d$ hyperplanes, one perpendicular to each coordinate axis. We study colorful variants of this result.

First, we use standard KKM machinery to establish a colorful analogue with $d+1$ color classes and a  $\lfloor\log_2(d+1)\rfloor+1$ transversal hypothesis for rainbow subfamiles. Then we focus on reducing the required number of colors. To this end, we introduce the rainbow pushability $\pi_r(P)$ of a polytope and we use it to state and prove few-color variants of the colorful Carathéodory theorem and the colorful polytopal KKMS theorem. Applying the latter to the $d$-dimensional cube, we show that  $\lfloor\log_2 d\rfloor+2$ color classes suffice for our transversal problem, provided that every rainbow choice has a common axis-parallel hyperplane transversal. We also extend both colorful results to the setting of convex $(n,D)$-bodies and we show that three colors are needed for $d\geq 2$.
\end{abstract}

\keywords{colorful Helly theorem, geometric transversals, orthogonal stars, separated $d$-intervals, KKM theorem, KKMS theorem, colorful Carath\'eodory, Kaiser--Rabinovich theorem, rainbow pushability}

\subjclass[2020]{Primary 52A35; Secondary 05D15, 52A37}

\maketitle

\section{Introduction}

\subsection{Helly-type transversals, orthogonal stars and separated intervals}\label{subsec:orth-star-intro}

Helly's theorem is a cornerstone result in discrete geometry that turns a local condition intro a global one: for a finite family of convex sets in $\R^d$, there is a common point for the whole family exactly when each subfamily of size at most $d+1$ does. Its many extensions, known as \emph{Helly-type results}, establish some global property from local conditions involving only a bounded number of sets, known as the \emph{Helly number}.  Classical and modern surveys include those of Danzer, Gr\"unbaum, and Klee \cite{DanzerGrunbaumKlee1963}, Eckhoff \cite{Eckhoff1993}, Wenger \cite{Wenger2004}, Amenta, De Loera, and Sober\'on \cite{AmentaDeLoeraSoberon2017}, and Holmsen and Wenger \cite{HolmsenWenger2017}.

Here we study a Helly-type question in which the local and global properties differ. A hyperplane in $\R^d$ is \emph{axis-parallel} if it has the form
\[
 H_i(c):=\{x\in\R^d:x_i=c\}
\]
for some $i\in[d]$ and $c\in\R$.  Given $x=(x_1,\ldots,x_d)$, we write
\[
 S_x:=H_1(x_1)\cup\cdots\cup H_d(x_d)
\]
and call $S_x$ the \emph{orthogonal star centered at $x$}. We consider the following question.

\begin{problem}\label{prob:orth-star-helly}
Does there exist an integer $k=k(d)$ such that the following holds? \textit{If $\F$ is a finite family of compact connected subsets of $\R^d$ and every subfamily of size at most $k$ has a common axis-parallel hyperplane transversal, then the whole family admits an
orthogonal star transversal.} If so, how small can $k(d)$ be?
\end{problem}

This problem already has an affirmative answer in the literature, but recognizing it requires translating it into the language of separated $d$-intervals. We carefully document this equivalence, along with several others, in Proposition~\ref{prop:dictionary}. A \emph{separated $d$-interval} is a subset of the disjoint union of $d$ copies of $\R$, called \emph{tracks}, consisting of one compact interval on each track. It is \emph{full} if each interval is nonempty. A family of separated $d$-intervals is \emph{strongly intersecting} if there is a single point belonging to every member, and \emph{weakly intersecting} if there are $d$ points, one on each track, whose union meets every member. In these terms, Kaiser and Rabinovich \cite{KaiserRabinovich1999} in particular proved that if every subfamily of at most $k_d:=\left\lfloor\log_2(d+1)\right\rfloor+1$ full separated $d$-intervals is strongly intersecting, then the entire family is weakly intersecting i.e. Problem~\ref{prob:orth-star-helly} has an affirmative answer with $k(d)=k_d$.

The $d$-interval formulation also places the problem within the literature on transversal and covering properties of multiple intervals, initiated by Gy\'arf\'as and Lehel \cite{GyarfasLehel1985} and connected to axis-parallel rectangles by K\'arolyi and Tardos \cite{KarolyiTardos1996}. Classical contributions include piercing bounds by Tardos, Kaiser, and Alon \cite{Tardos1995,Kaiser1997,Alon1998}, and lower bounds by Matou\v{s}ek \cite{Matousek2001}. More recent developments include pairwise intersection results and connections to chordal edge-colorings by Bar\'at, Gy\'arf\'as, and S\'ark\"ozy \cite{BaratGyarfasSarkozy2025}, and a convexity approach by Rao \cite{Rao2025}. Our starting point is to seek colorful variants of Problem~\ref{prob:orth-star-helly}.

\subsection{Colorful orthogonal stars and two parameters}

Colorful Helly theory replaces one family by several color classes and checks only rainbow selections. The classic theorem is due to Lov\'asz, with the first published proof appearing in B\'ar\'any's work \cite{Barany1982}: if $\F_1,\ldots,\F_{d+1}$ are finite families of convex sets in $\R^d$ and for every choice $F_i\in \F_i$ ($i=1,\ldots,d+1$) we have that $F_1\cap \ldots \cap F_{d+1}$ is not empty, then one color class has nonempty intersection. The number $d+1$ of colors is optimal, although weaker transversal conclusions may survive with fewer colors \cite{MartinezRoldanRubin2020}.

Even when a colorful variant of a Helly-type theorem exists, the number of colors required may differ from the ordinary Helly number. For example, axis-parallel boxes in $\R^d$ have Helly number $2$, whereas their colorful counterpart requires $d+1$ color classes, although testing rainbow pairs still suffices to guarantee an intersecting color class. Related questions concerning the number of colors and the size of rainbow tests arise in work on Dol'nikov's problem concerning piercing translates of planar convex bodies \cite{JeronimoMagazinovSoberon2015,MartinezRoldan2024}, colorful line-transversal theorems \cite{McGinnisZerbib2022,Csizmadia2025}, and quantitative Helly theory \cite{SarkarXueSoberon2021}.

Motivated by these examples, we pose the following question

\begin{problem}\label{prob:colorful-orth-star}
For which pairs of integers $(m,k)$ does the following implication hold? If $\F_1,\ldots,\F_m$ are finite nonempty families of compact connected subsets of $\R^d$ and every rainbow subfamily of size at most $k$ has a common axis-parallel hyperplane transversal, must some color class have an orthogonal star transversal?
\end{problem}

We mostly focus on two natural parameters associated with this problem. Let $h_d$ be the smallest $k$ for which $(d+1,k)$ is a valid pair in Problem~\ref{prob:colorful-orth-star}, and let $c_d$ be the smallest $m$ for which $(m,m)$ is valid.

In Section~\ref{sec:benchmark}, we prove that $h_d\leq k_d$, where $k_d$ is the Kaiser--Rabinovich bound discussed above. This serves mainly as a benchmark, as our main transversal result provides a logarithmic upper bound for $c_d$, establishing a colorful result with substantially fewer colors.

\begin{theorem}\label{thm:c-main}
For every $d\ge1$,
\[
c_d\le\left\lfloor\log_2 d\right\rfloor+2.
\]
Equivalently, if $r=\left\lfloor\log_2d\right\rfloor+2$ and $\F_1,\ldots,\F_r$ are finite nonempty families of compact connected sets in $\R^d$ such that every complete rainbow choice admits a common axis-parallel hyperplane transversal, then some $\F_i$ admits an orthogonal star transversal.
\end{theorem}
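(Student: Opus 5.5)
We work on the cube $Q=[0,1]^d$ after rescaling, so that every set in $\bigcup_i\F_i$ lies in the interior of $Q$. The centers $x\in Q$ of candidate orthogonal stars form the configuration space. For a compact connected set $F$ and a coordinate $j\in[d]$, the projection $\pi_j(F)$ is a compact interval $[a_j(F),b_j(F)]$. The set $F$ meets $S_x$ iff $x_j\in\pi_j(F)$ for some $j$. The plan is to apply the few-color polytopal KKMS theorem to $Q$. The abstract announces this theorem, stated in terms of the rainbow pushability $\pi_r(Q)$, and we assume it is available earlier in the paper in the form: if $r\ge \pi_r(Q)$, the color classes carry KKMS covers of $Q$ indexed by the faces of $Q$, and the cover is balanced in a colorful sense, then some rainbow selection of faces has a common point of the corresponding covering sets. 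The key external input is the computation $\pi_r([0,1]^d)\le\lfloor\log_2 d\rfloor+2$, which we take as established alongside that theorem.

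\textbf{Step 1: the covers.} For each color $i$ and each nonempty face $\sigma$ of $Q$, define a closed set $C^i_\sigma\subseteq Q$. A face is given by fixing some coordinates to $0$ and some to $1$. Put $x\in C^i_\sigma$ roughly when there is $F\in\F_i$ missing $S_x$ whose position relative to $x$ is recorded by $\sigma$. Concretely, if $F$ misses $S_x$, then for each $j$ the interval $\pi_j(F)$ lies entirely below or entirely above $x_j$. This gives a sign vector in $\{0,1\}^d$, that is, a vertex $v(F,x)$ of $Q$. We let $C^i_{v}$ be the closure of the set of $x$ admitting some $F\in \F_i$ with $v(F,x)=v$. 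To obtain covers indexed by all faces, we let $C^i_\sigma$ for higher faces collect the $x$ where $S_x$ is a transversal of $\F_i$ "up to the coordinates free in $\sigma$". The simplest choice is to put the whole set where $\F_i$ is pierced into the top face $Q$ itself.

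\textbf{Step 2: KKMS boundary condition.} We check that each face $\tau$ of $Q$ is covered by $\bigcup_{\sigma\subseteq\tau}C^i_\sigma$. If $x_j=0$, every $F$ lies above $x_j$ (by interiority), so the corresponding sign is forced. Hence every $v(F,x)$ lies in the face containing $x$, which is exactly the KKM-type condition. If $\F_i$ has no orthogonal star transversal, then the top-face set is empty and the vertex sets alone cover $Q$. This is where we argue by contradiction: we assume no $\F_i$ has a star transversal.

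\textbf{Step 3: extracting the contradiction.} The few-color KKMS theorem gives a point $x$ and a rainbow choice $F_1\in\F_1,\dots,F_r\in\F_r$ such that the vertices $v(F_i,x)$ are "balanced". In the pushability language this means that they cannot all be pushed to one side. Concretely, for every coordinate $j$ both signs $0$ and $1$ occur among the $v(F_i,x)_j$. A closure/limit argument handles the boundary of the open conditions, using compactness and finiteness. By hypothesis, $F_1,\dots,F_r$ share an axis-parallel hyperplane $H_j(c)$. But in coordinate $j$ some $F_i$ lies strictly below $x_j$ and another strictly above. Thus $c<x_j<c$, which is a contradiction.

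The main obstacle is Step 3. One must match precisely the balancing notion produced by the few-color KKMS theorem with "both signs appear in every coordinate". One must also justify that $\lfloor\log_2 d\rfloor+2$ colors, via $\pi_r(Q)$, suffice to force that balancing. The combinatorial core of the pushability bound is a binary-encoding argument: $r-1=\lfloor\log_2 d\rfloor+1$ bits suffice to separate the $d$ coordinates, with one extra color. If the KKMS statement only yields a point in $\conv$ of the chosen vertices containing the center, we instead use the fact that the center $(\tfrac12,\dots,\tfrac12)$ lies in the convex hull of vertices iff each coordinate takes both values, which gives the same conclusion.
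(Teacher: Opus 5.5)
Your architecture matches the paper's. A color class with no orthogonal star transversal gives a vertex-indexed cubical KKM cover. The pushed colorful polytopal KKMS theorem with $y_v^i=v$ then returns a common point $x$ and labels $v_1,\ldots,v_r$ not all in one facet. The rainbow hypothesis plus the pushing computation (Lemma~\ref{lem:pushing}) gives the contradiction.

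\textbf{The closure step breaks the final computation.} Suppose $x$ lies only in the closure of $\{x':\exists F\in\F_i,\ v(F,x')=v_i\}$. By finiteness there is a fixed witness $F_i$, but it is separated from $x$ only weakly: in each coordinate, $\operatorname{pr}_j(F_i)$ lies weakly on one side of $x_j$. A common hyperplane $H_j(c)$, with both signs occurring in coordinate $j$, then only gives $c\le x_j\le c$, i.e.\ $c=x_j$. That is perfectly consistent, since all $F_i$ may simply touch $H_j(x_j)\subset S_x$. So your claim that ``some $F_i$ lies strictly below $x_j$ and another strictly above'' is false at such points. No limit argument at $x$ repairs this, because the closures can have a common point while the open sets do not.

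The fix is to keep the sets open, $A_v^i=\{x:\exists F\in\F_i,\ F\subset R_x^v\}$, and use the open-cover version of the pushed KKMS theorem (obtained from the closed case by finite shrinking), as the paper does. Alternatively, use a uniform margin. If $\F_i$ has no star transversal, then $x\mapsto\max_{F\in\F_i}\min_j\operatorname{dist}(x_j,\operatorname{pr}_j(F))$ is continuous and positive on the cube, hence at least some $\delta>0$. Requiring separation at least $\delta$ then gives a closed KKM cover whose conclusion keeps strict inequalities.

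\textbf{You have also misread the black box.} $\pi_r(P)$ is a probability: the largest chance that collectively balanced independent random vertices share a facet. The hypothesis is $\pi_r(P)<1$ for a polytope centered at the origin, not ``$r\ge\pi_r(Q)$'', which is always true. The input actually needed is $\pi_r(C_d)=\min\{1,d/2^{r-1}\}$, which is $<1$ exactly when $r\ge\lfloor\log_2 d\rfloor+2$. The upper bound that makes $r$ colors suffice is a union bound over the $2d$ facets, each facet event having probability at most $2^{-r}$ by balance and AM--GM. The binary-encoding (antipodal classes) picture you describe is the sharpness construction showing $\pi_r(C_d)=1$ when $2^{r-1}\le d$, not a proof of sufficiency.

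\textbf{Your fallback is not available.} A KKMS conclusion placing the center in the convex hull of the selected vertices is the Frick--Zerbib theorem. It needs $d+1$ covers, so it only recovers the $d+1$-color benchmark, not the logarithmic bound. Also, the ``iff'' is false; only ``center in hull $\Rightarrow$ both values in every coordinate'' holds. For instance, $(0,0,0),(1,1,0),(0,1,1)$ show both values in every coordinate but their hull misses the center.
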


For finite families, no compactness assumption is needed, and for compact connected sets a version with possibly infinite families holds as well (see Appendix~\ref{app:technical}). By the transversal dictionary of Proposition~\ref{prop:dictionary}, this theorem also admits a formulation in terms of full separated $d$-intervals, which we state in Corollary~\ref{cor:dintervals}. It is thus closely related to colorful piercing and matching results for $d$-intervals, including those of Frick and Zerbib \cite{FrickZerbib2019} and their sparse-colorful extensions by McGinnis and Zerbib \cite{McGinnisZerbib2024}.

While our benchmark follows from the standard colorful KKMS machinery described below, combined with the work of Kaiser--Rabinovich, reducing the number of colors requires a new approach. To this end, we introduce rainbow pushability and establish few-color variants of the colorful Carath\'eodory and polytopal KKMS theorems.

\subsection{KKM methods in transversal theory and the pushability trade-off}\label{subsec:kkm-intro}

The KKM theorem of Knaster, Kuratowski, and Mazurkiewicz \cite{KKM} has inspired a rich family of colorful and polytopal extensions, including those of Gale \cite{Gale1984}, Shapley \cite{Shapley1973}, Shih and Lee \cite{ShihLee1993}, and Komiya \cite{Komiya1994}. These developments culminated in the colorful polytopal KKMS theorem of Frick and Zerbib \cite{FrickZerbib2019}, which also contains colorful Carath\'eodory as a special case. More recently, sparse colorful KKM theorems have been developed in connection with fair division and other applications \cite{SoberonFair2022,McGinnisZerbib2024}.

KKM methods have also proved particularly useful in geometric transversal theory, including piercing problems for $d$-intervals \cite{AharoniKaiserZerbib2017,AharoniHolzmanZerbib2017,FrickZerbib2019}, the planar Hadwiger--Debrunner $(4,3)$ problem \cite{McGinnis43}, and Eckhoff's $T(3)$ problem \cite{McGinnisZerbib2022}. For a broader account of these applications, see the survey of McGinnis and Zerbib \cite{McGinnisZerbibSurvey}. The underlying strategy is to parameterize geometric configurations by a simplex or polytope, encode the relevant conditions through a KKM-type cover, and use topology to guarantee the existence of the desired configuration. Our approach follows this philosophy, using a cubical parameterization.

We record the colorful polytopal KKMS theorem of Frick and Zerbib below.

\begin{theorem}[Frick--Zerbib, colorful polytopal KKMS]\label{thm:FZ}
Let $P\subset\R^d$ be a $d$-dimensional polytope and let $p\in P$.  For each
$i\in[d+1]$, let
\[
 \{A_\sigma^i:\sigma\text{ a nonempty face of }P\}
\]
be an open or closed polytopal KKMS cover, meaning that for every nonempty face $G$ of
$P$,
\[
 G\subseteq\bigcup_{\varnothing\ne\sigma\subseteq G} A_\sigma^i.
\]
Prescribe a point $y_\sigma^i\in\sigma$ for every nonempty face $\sigma$ and every
$i\in[d+1]$.  Then there are nonempty faces $\sigma_1,\ldots,\sigma_{d+1}$ such that
\[
 p\in\conv\{y_{\sigma_1}^1,\ldots,y_{\sigma_{d+1}}^{d+1}\}
 \qquad\text{and}\qquad
 \bigcap_{i=1}^{d+1}A_{\sigma_i}^i\ne\varnothing.
\]
\end{theorem}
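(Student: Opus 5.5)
This is the Frick--Zerbib theorem, quoted from \cite{FrickZerbib2019}. If a proof is needed, I would reconstruct their degree-theoretic argument in three steps: reduce to closed covers, build a piecewise-linear map out of a partition of unity, and run a degree count.

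\textbf{Reduction to closed covers.} For open covers, shrink each $A^i_\sigma$ to a closed subset that still satisfies the covering condition. This can be done with a partition of unity subordinate to the cover restricted to each face.

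For closed covers, approximate by open $\eps$-neighbourhoods. Once the open case is proved, we obtain faces $\sigma_i^\eps$ and a point in $\bigcap_i (A^i_{\sigma_i^\eps})_\eps$. There are finitely many faces, so passing to a subsequence fixes the faces. Compactness then gives a point in $\bigcap_i A^i_{\sigma_i}$. The convex-hull condition involves only the finitely many prescribed points $y^i_\sigma$, so it survives the limit unchanged.

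\textbf{The piecewise-linear maps.} Take the open case and a fine triangulation $T$ of $P$. For each colour $i$, choose a partition of unity $\{\phi^i_\sigma\}$ subordinate to $\{A^i_\sigma\}$. For each face $G$ of $P$, arrange that $\phi^i_\sigma$ vanishes on $G$ unless $\sigma\subseteq G$; this is possible precisely because of the KKMS covering condition. Define
\[
 f_i(x)=\sum_\sigma \phi^i_\sigma(x)\, y^i_\sigma .
\]
Then $f_i$ maps each face $G$ into $G$, since every $y^i_\sigma$ with $\sigma\subseteq G$ lies in $G$. A face-preserving self-map of $P$ is homotopic to the identity relative to the boundary structure, so it has degree one.

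\textbf{Degree count and the main obstacle.} The colourful statement needs all $d+1$ colours at once, which is the main obstacle. Following Frick--Zerbib, I would use the Bapat/Sperner-type construction: work on $P\times\Delta^{d}$, or equivalently consider
\[
 F(x,\lambda)=\sum_{i=1}^{d+1} \lambda_i f_i(x), \qquad \lambda\in\Delta^{d}.
\]
A degree argument, namely counting fully labelled cells in a product triangulation in which each simplex records which $\phi^i_\sigma$ are positive, produces $x$ and $\lambda$ with $F(x,\lambda)$ related to $p$. The existence of such a fully labelled cell uses that every $f_i$ has degree one, via an induction on faces in the style of Shapley's proof of KKMS.

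From that cell, Carathéodory-type reasoning selects, for each colour $i$, one face $\sigma_i$ with $\phi^i_{\sigma_i}(x)>0$, so that $x\in\bigcap_i A^i_{\sigma_i}$. The same selection gives $p\in\conv\{y^1_{\sigma_1},\dots,y^{d+1}_{\sigma_{d+1}}\}$. The delicate part is making the Carathéodory selection pick exactly one face per colour. Frick--Zerbib handle this via a colourful Carathéodory step inside the degree count, and I would follow their argument there rather than re-derive it.
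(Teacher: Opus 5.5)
The paper gives no proof of this statement; it quotes it from \cite{FrickZerbib2019}, so your opening citation matches the paper. Your reconstruction, however, has a genuine gap at the only step that is actually colorful.

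Your first two steps reprove only Komiya's non-colorful polytopal KKMS. These steps are the face-compatible partitions of unity, the face-preserving maps $f_i$ of degree one, and the shrinking and $\eps$-approximation. Write $X_i(x)=\{y^i_\sigma:\phi^i_\sigma(x)>0\}$. For each color $i$ separately, the steps give a point $x$ with $p\in\conv X_i(x)$, but $x$ may differ from color to color.

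The map $F(x,\lambda)=\sum_i\lambda_i f_i(x)$ adds nothing. Already $F(x,e_1)=f_1(x)$ hits $p$, and a solution of $F(x,\lambda)=p$ only says $p\in\conv\bigcup_i X_i(x)$.

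Even a genuinely balanced conclusion at one point does not allow a rainbow selection capturing $p$. For example, a degree argument for the face-preserving average $\frac{1}{d+1}\sum_i f_i$ gives $x$ with $\sum_i f_i(x)=(d+1)p$. Here is a counterexample to the selection step. In $\R^2$ with $p=0$, take $X_1=\{(5,-2),(-5,-2)\}$ and $X_2=X_3=\{(0,1)\}$. Then $(0,-2)+(0,1)+(0,1)=0$ with each summand in the corresponding $\conv X_i$. Yet every rainbow hull is a segment from $(\pm 5,-2)$ to $(0,1)$, which misses the origin.

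Colorful Carath\'eodory would need $p\in\conv X_i(x)$ for \emph{every} $i$ at the \emph{same} $x$, and nothing in your construction provides that. This is also why the averaging argument the paper uses for Theorem~\ref{thm:pushed-kkms} yields only the weaker conclusion of escaping all facets. So deferring ``the delicate part'' to Frick--Zerbib defers the entire content of the colorful theorem.

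The missing idea is to build the colors into the triangulation rather than into a selection step.
\begin{enumerate}
\item Let $T$ be a triangulation of $P$ with mesh at most $\eps$, and let $T'$ be its barycentric subdivision. Give the vertex of $T'$ at the barycenter of a face $\tau$ of $T$ the color $\dim\tau+1\in[d+1]$. Each $d$-simplex of $T'$ corresponds to a maximal chain of faces of $T$, so it has exactly one vertex of each color.
\item At a vertex $u$ of color $c$, the KKMS condition for $\supp(u)$ gives a face $\sigma(u)\subseteq\supp(u)$ with $u\in A^c_{\sigma(u)}$. Put $z(u)=y^c_{\sigma(u)}$, which lies in $\supp(u)$.
\item Lemma~\ref{lem:face-preserving-selection} gives a simplex of $T'$ whose labels capture $p$. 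Enlarge it to a $d$-simplex $\rho$ of $T'$; the convex hull only grows.
\item The vertices $u_1,\ldots,u_{d+1}$ of $\rho$, one per color, satisfy $p\in\conv\{y^1_{\sigma(u_1)},\ldots,y^{d+1}_{\sigma(u_{d+1})}\}$ and $u_i\in A^i_{\sigma(u_i)}$, with $\operatorname{diam}\rho\le\eps$.
\item The finiteness-plus-compactness limit from the proof of Theorem~\ref{thm:pushed-kkms} then finishes the closed case, and shrinking handles open covers.
\end{enumerate}
With this device, the partitions of unity and the product space $P\times\Delta^d$ become unnecessary.
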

 
The dimension enters here twice: $P$ is $d$-dimensional and the theorem uses $d+1$ covers, which typically determine the number of colors in transversal applications. However, as our benchmark proof in Section~\ref{sec:benchmark} will reveal, the convex-hull conclusion is stronger than we need: it suffices that the selected points are not all contained in a common facet of $P$. This observation will allow us to reduce the number of covers, and hence the number of colors.

This motivates the following definition. For a centered polytope $P$ (i.e., $0\in\operatorname{int}P$), let $\pi_r(P)$ be the maximum, over all choices of $r$ independent random vertices of $P$ whose expectations sum to zero, of the probability that they lie in a common facet. We call this the $r$-\emph{rainbow pushability} of $P$ (see Definition~\ref{def:pushability}). The following two theorems show how this parameter allows us to weaken the usual colorful convex hull conclusion.

First, restricting the definition of rainbow pushability to vertex valued random variables is the same as if the variables are allowed to take arbitrary values in $\partial P$ or even in $P$. This leads to the following relaxation of the classical colorful Carath\'eodory theorem \cite{Barany1982}.

\begin{theorem}[Pushed colorful Carath\'eodory]\label{thm:pushed-caratheodory}
Let $P\subset\R^d$ be a centered polytope with $\pi_r(P)<1$. If $X_1,\ldots,X_r\subseteq\partial P$ are nonempty sets satisfying
\[
0\in\conv X_i\qquad(i\in[r]),
\]
then there are $x_i\in X_i$ such that
\[
\relint\conv\{x_1,\ldots,x_r\}\subseteq\operatorname{int}P.
\]
Equivalently, the selected points are not contained in any common proper face of $P$.
\end{theorem}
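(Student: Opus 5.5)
The plan is to argue by contradiction and pass to random variables, using the remark above that the definition of $\pi_r(P)$ is unchanged if the random points are allowed to take values anywhere in $\partial P$. Since $0\in\conv X_i$, Carath\'eodory's theorem gives finitely many points $x_{i,1},\ldots,x_{i,m_i}\in X_i$ and weights $\lambda_{i,j}>0$ summing to $1$ with $\sum_j\lambda_{i,j}x_{i,j}=0$. Let $Z_i$ be the random point of $\partial P$ equal to $x_{i,j}$ with probability $\lambda_{i,j}$, and take $Z_1,\ldots,Z_r$ independent. Then $\mathbb E Z_i=0$ for every $i$, so in particular $\sum_i\mathbb E Z_i=0$.

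Invoking the boundary-valued form of the definition yields
\[
\Pr[Z_1,\ldots,Z_r \text{ lie in a common facet}]\le \pi_r(P)<1 .
\]
Since every proper face of $P$ is contained in a facet, with positive probability the realized tuple $(x_{1,j_1},\ldots,x_{r,j_r})$ has no common facet. Because the distributions are finitely supported with positive atoms, some specific choice $x_i:=x_{i,j_i}\in X_i$ has this property, so the selected points do not lie in a common proper face of $P$.

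It remains to verify the stated equivalence: no common proper face is the same as $\relint\conv\{x_1,\ldots,x_r\}\subseteq\operatorname{int}P$.
\begin{itemize}
\item[(a)] If the relative interior meets $\partial P$ at a point $z$, take a supporting hyperplane $H$ of $P$ through $z$. The set $C=\conv\{x_1,\ldots,x_r\}$ lies on one side of $H$ and $z\in\relint C\cap H$, so all of $C$ lies in $H$. Hence every $x_i$ lies in the proper face $H\cap P$.
\item[(b)] Conversely, if all $x_i$ lie in a proper face $F$, then $C\subseteq F\subseteq\partial P$.
\end{itemize}

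The main obstacle is the reduction from $\partial P$-valued to vertex-valued random variables, if the paper has not yet established it. To prove it directly, write each $x\in\partial P$ as a convex combination of the vertices of the \emph{minimal} face $F(x)$ containing $x$. Then replace $Z_i$ by a random vertex $V_i$, drawn conditionally on $Z_i$ according to these weights and independently across $i$. This preserves the expectations. Moreover, any facet containing $Z_1,\ldots,Z_r$ contains each $F(Z_i)$, hence contains $V_1,\ldots,V_r$. So the common-facet probability can only increase, giving $\Pr[Z\text{'s share a facet}]\le\Pr[V\text{'s share a facet}]\le\pi_r(P)$, which is exactly the bound used above.
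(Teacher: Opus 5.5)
Your proof is correct and follows the paper's own route: encode each convex representation of $0$ as an independent finitely supported random point, bound the common-facet probability by refining each point to a random vertex of its minimal face (Proposition~\ref{prop:vertex-reduction}), and get the relative-interior equivalence from a supporting-hyperplane argument (Lemma~\ref{lem:relint-criterion}). The only cosmetic difference is that the paper goes through Proposition~\ref{prop:pushed-selection} with general $b_i$ summing to zero, while you take $\mathbb E Z_i=0$ for each $i$ directly.
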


The classical version requires $d+1$ colors in general to guarantee that the convex hull of a rainbow selection contains the origin. Our theorem allows potentially fewer colors by relaxing this conclusion: instead of containing the origin, the rainbow convex hull need only avoid being contained in a facet of $P$.

Second, the same pushability threshold yields a relaxation of the polytopal KKMS theorem. The number $r$ of covers below is not fixed
by $\dim P$ if we allow the conclusion to be relaxed.

\begin{theorem}[Pushed Colorful Polytopal KKMS]\label{thm:pushed-kkms}
Let $P$ be a polytope with $0\in\operatorname{int}P$, and let $r\ge1$.  Suppose
$\pi_r(P)<1$.  For each $i\in[r]$, let
\[
 \{A_\sigma^i:\sigma\text{ a nonempty face of }P\}
\]
be a closed polytopal KKMS cover of $P$, and prescribe a point $y_\sigma^i\in\sigma$
for every nonempty face $\sigma$.  Then there are nonempty faces
$\sigma_1,\ldots,\sigma_r$ such that
\[
\relint\conv\{y_{\sigma_1}^1,\ldots,y_{\sigma_r}^r\}
 \subseteq\operatorname{int}P  \qquad \text{and} \qquad \bigcap_{i=1}^r A_{\sigma_i}^i\ne\varnothing. 
\]
In particular, the selected faces are not all contained in a common facet of $P$.  The
same conclusion holds for open KKMS covers.
\end{theorem}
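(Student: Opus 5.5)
We derive the statement from the usual KKMS degree machinery, using the combinatorial Pushed colorful Carath\'eodory theorem (Theorem~\ref{thm:pushed-caratheodory}) in place of the classical colorful Carath\'eodory step. We first treat closed covers and reduce to open ones at the end by the standard shrinking/compactness argument: intersect with shrinking closed neighbourhoods, use finiteness of face tuples, and pass to a limit.

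\textbf{Step 1 (partition of unity and a map for each colour).} Fix a closed KKMS cover $\{A^i_\sigma\}$. For $\varepsilon>0$ replace it by the open cover by $\varepsilon$-neighbourhoods $U^i_\sigma$. These still satisfy the KKMS condition. Take a partition of unity $\{\phi^i_\sigma\}$ subordinate to $\{U^i_\sigma\}$ and define
\[
 f_i(x)=\sum_\sigma \phi^i_\sigma(x)\, y^i_\sigma .
\]
If $x$ lies in a face $G$, then $\phi^i_\sigma(x)>0$ only for faces $\sigma$ whose neighbourhood contains $x$. After refining so that the KKMS property localizes (the usual trick: restrict $U^i_\sigma$ away from faces not containing $\sigma$), $f_i$ maps each face $G$ into $G$. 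A homotopy $tx+(1-t)f_i(x)$ then shows that $f_i|_{\partial P}$ has degree one around $0$.

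\textbf{Step 2 (finding a point where the colours are balanced).} We look for a point $x\in P$ at which $0$ lies in the convex hull of each set
\[
 X_i(x)=\{y^i_\sigma:\phi^i_\sigma(x)>0\},
\]
after rescaling these points to $\partial P$ along rays from $0$, or more directly by a colour-wise argument. The cleanest route, following Frick--Zerbib, is to consider $F(x)=\frac1r\sum_i f_i(x)$. This map also preserves faces, so by degree it has a zero $x^*$. Then $0=\sum_i f_i(x^*)$ is a sum of $r$ expectations of independent random vertices of $P$: colour $i$ picks $y^i_\sigma$ with probability $\phi^i_\sigma(x^*)$. Each $y^i_\sigma$ is written as a fixed convex combination of vertices of $\sigma$, so that every outcome is a vertex. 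Since $\pi_r(P)<1$, with positive probability the chosen vertices are not all in a common facet. Hence there is some outcome, a tuple of faces $\sigma_1,\dots,\sigma_r$ each with $\phi^i_{\sigma_i}(x^*)>0$, such that the vertices chosen from them are not in a common facet.

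We then need to pass from vertices back to the $y^i_{\sigma_i}$. If every $y^i_{\sigma_i}$ were in a common facet $F$, then each $\sigma_i\subseteq F$, because $y^i_{\sigma_i}$ lies in the relative interior-type support of $\sigma_i$ and we may take $y$ inside $\sigma$. Then all vertices drawn would lie in $F$, a contradiction. Here we must check the subtlety that $y^i_\sigma$ may lie on the boundary of $\sigma$. We handle it by decomposing $y^i_\sigma$ only over vertices of the minimal face containing it, which is contained in $\sigma$. Not lying in a common facet is equivalent to $\relint\conv\subseteq\operatorname{int}P$. Finally, $x^*\in\bigcap_i U^i_{\sigma_i}$.

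\textbf{Step 3 (limit).} We let $\varepsilon\to0$ along a sequence. Since there are finitely many face tuples, one tuple $(\sigma_1,\dots,\sigma_r)$ recurs infinitely often. By compactness the intersection $\bigcap_i A^i_{\sigma_i}$ is nonempty. The facet condition depends only on the tuple, so it persists in the limit.

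The main obstacle is Step 1: arranging that each $f_i$, and hence $F$, maps every face into itself. The KKMS condition only guarantees that points of $G$ are covered by sets indexed by subfaces of $G$; it does not prevent points of $G$ from lying in $A_\sigma$ with $\sigma\not\subseteq G$. We would first try the standard fix of multiplying $\phi^i_\sigma$ by a cutoff vanishing near faces not containing $\sigma$, as in Komiya's proof. If that fails, we would instead invoke the Frick--Zerbib Theorem~\ref{thm:FZ} machinery to obtain $x^*$ with $0$ balanced, and use pushability only in the final selection.
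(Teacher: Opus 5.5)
Your proposal is correct. It reaches the result by a different, continuous route built on the same key idea as the paper: average the $r$ face-preserving color maps, find a zero of the average, read that zero as collective balance, and let $\pi_r(P)<1$ select a rainbow outcome off every facet. Only $\sum_i f_i(x^*)=0$ is available, not $f_i(x^*)=0$ for each $i$. So the tool you actually use in Step~2 is Proposition~\ref{prop:pushed-selection}, with your minimal-face refinement being Proposition~\ref{prop:vertex-reduction}, and not Theorem~\ref{thm:pushed-caratheodory} as you announce at the start.

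The paper instead works discretely:
\begin{enumerate}[label=(\roman*)]
\item It triangulates $P$ finely.
\item It uses the KKMS condition at $\supp(u)$ to choose labels $\sigma_i(u)\subseteq\supp(u)$ at each triangulation vertex.
\item It applies the vertexwise Sperner--Shapley Lemma~\ref{lem:face-preserving-selection} to $z(u)=\frac1r\sum_i y^i_{\sigma_i(u)}$.
\item It feeds the barycentric weights $\lambda_u$, shared by all colors, into Proposition~\ref{prop:pushed-selection}.
\end{enumerate}
There face preservation comes for free. However, the selected $u_i$ are in general different vertices of $\tau$, so a limit $\eps\to0$ is needed, and open covers must first be shrunk to closed ones. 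In your version a single point $x^*$ lies in every selected set. Hence for an open cover, Steps~1--2 with $U^i_\sigma=A^i_\sigma$ give the conclusion with no limit at all; the $\eps$-neighbourhood limit is needed only in the closed case. The price is that face preservation must be engineered into the partition of unity.

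That engineering is exactly the fix you name in Step~1, and it works provided you apply it to the cover before choosing the partition of unity. Put
\[
 W^i_\sigma:=U^i_\sigma\setminus\bigcup\{G:\ G\text{ a face of }P,\ \sigma\not\subseteq G\}
 =\{x\in U^i_\sigma:\ \sigma\subseteq\supp(x)\}.
\]
\begin{itemize}
\item $W^i_\sigma$ is open in $P$, since only a finite union of closed faces is removed.
\item The sets $W^i_\sigma$ still cover $P$: the KKMS condition applied to the face $\supp(x)$ gives some $\sigma\subseteq\supp(x)$ with $x\in U^i_\sigma$.
\item Any partition of unity subordinate to $\{W^i_\sigma\}_\sigma$ yields $f_i(x)\in\conv\{y^i_\sigma:\sigma\subseteq\supp(x)\}\subseteq\supp(x)$.
\item $\phi^i_{\sigma_i}(x^*)>0$ gives $x^*\in W^i_{\sigma_i}\subseteq U^i_{\sigma_i}$.
\end{itemize}

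The alternative you mention afterwards can break down: multiplying already chosen $\phi^i_\sigma$ by cutoffs vanishing \emph{near} the bad faces. The cutoffs must vanish only \emph{on} those faces. Even then renormalization may be impossible, because a partition of unity subordinate to the unrestricted cover need not be positive on the good sets. For example, on $[-1,1]$ take $U_{\{-1\}}=[-1,1]$, $U_{\{1\}}=(0,1]$, $U_P=\varnothing$ and $\phi_{\{-1\}}\equiv1$; then every weight vanishes at $x=1$.
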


The cube provides a concrete illustration of this trade-off.

\begin{theorem}\label{thm:cube-push}
For $C_d=[-1,1]^d$,
\[
 \pi_r(C_d)=\min\left\{1,\frac{d}{2^{r-1}}\right\}.
\]
\end{theorem}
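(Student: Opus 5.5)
We must show two inequalities: an upper bound $\pi_r(C_d)\le \min\{1,d/2^{r-1}\}$ valid for all admissible random vertices, and a construction attaining it. The facets of $C_d$ are $\{x_j=\pm1\}$, so $r$ vertices $v_1,\ldots,v_r$ lie in a common facet exactly when there is a coordinate $j$ with $(v_1)_j=\cdots=(v_r)_j$. Let $X_1,\ldots,X_r$ be independent random vertices with $\sum_i \mathbb E X_i=0$. Write $p_{i,j}=\Pr[(X_i)_j=1]$, so $\mathbb E (X_i)_j=2p_{i,j}-1$.

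\textbf{Upper bound.} By the union bound over coordinates,
\[
\Pr[\text{common facet}]\le \sum_{j=1}^d \Bigl(\prod_i p_{i,j}+\prod_i (1-p_{i,j})\Bigr),
\]
using independence across $i$; correlations between coordinates inside a single $X_i$ do not matter, because we only need the marginals. It then suffices to prove the one-dimensional claim: if $p_1,\ldots,p_r\in[0,1]$ satisfy $\sum_i p_i=r/2$ (the coordinatewise centering condition), then $\prod p_i+\prod(1-p_i)\le 2^{1-r}$. I would prove this by maximizing $f(p)=\prod p_i+\prod(1-p_i)$ over the slice $\{\sum p_i=r/2\}\cap[0,1]^r$. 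The function $f$ is multiaffine. Fixing all but two coordinates $p_a,p_b$ with $p_a+p_b=s$, the function is a quadratic in $p_a$ with leading coefficient $-(\prod_{i\ne a,b}p_i+\prod_{i\neq a,b}(1-p_i))\le 0$, and it is symmetric under $p_a\leftrightarrow p_b$. Hence it is maximized at $p_a=p_b$. Repeated smoothing, or a compactness argument choosing a maximizer with the fewest unequal pairs, forces all $p_i=1/2$, which gives $2\cdot 2^{-r}$. I expect this to be the main technical step. The concavity of each pairwise restriction is the key point, and boundary cases (some $p_i\in\{0,1\}$) need care, but concavity handles them too. Combined with the trivial bound $1$, this gives $\pi_r(C_d)\le\min\{1,d/2^{r-1}\}$.

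\textbf{Lower bound.} We need joint distributions making the union bound tight, meaning the events $E_j$ = ``all $X_i$ agree in coordinate $j$'' are disjoint and each has probability $2^{1-r}$, or they cover everything when $d\ge 2^{r-1}$. A natural construction: let $X_1$ be uniform on $\{\pm1\}^d$. For $i\ge 2$, each $X_i$ is a uniformly random vertex with all coordinates equal, $X_i=\epsilon_i(1,\ldots,1)$ with $\epsilon_i$ uniform; this is centered. Agreement in coordinate $j$ then requires $\epsilon_2=\cdots=\epsilon_r=\epsilon$ (probability $2^{2-r}$) and $(X_1)_j=\epsilon$. The facet event is that the $\epsilon_i$ coincide and $X_1$ has some coordinate equal to $\epsilon$, which has probability $2^{2-r}(1-2^{-d})$. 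This is not sharp, so I would refine it.

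The better plan is to use the $r$ sign patterns. Condition on $\epsilon=(\epsilon_1,\ldots,\epsilon_r)$ uniform in $\{\pm1\}^r$; the $2^{r-1}$ antipodal pairs $\pm\epsilon$ index ``agreement classes.'' The construction should make coordinate $j$ behave like an independent copy whose signs realize pair $\{\pm\mathbf 1\}$ exactly on a set of outcomes of measure $2^{1-r}$ disjoint across $j\le\min(d,2^{r-1})$. Concretely, I would take a uniform random label $L\in\{\pm1\}^r/\pm$ of size $2^{r-1}$ and set coordinate $j$ of $X_i$ to $\epsilon_i\cdot s_{j,i}(L)$ for suitable sign tables. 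Independence across $i$ is the obstacle, since a shared $L$ breaks it. I would therefore realize the shared randomness via Hadamard/parity structure: choose independent uniform signs $\epsilon_i$, and for coordinate $j$ indexed by a nonzero vector $w\in\mathbb F_2^{r-1}$, let $(X_i)_j$ be a fixed character evaluation in the independent bits, arranged so that all agree iff the bits lie in a specific coset. Each $X_i$ is uniform on its support, hence centered. Verifying that these coset events are disjoint, each of probability $2^{1-r}$, and can be chosen for up to $2^{r-1}$ coordinates, is the part I expect to require the most fiddling.
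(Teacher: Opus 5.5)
Your upper bound is correct and is essentially the paper's argument. The paper also takes a union bound over the $2d$ facets (your event $E_j$ is just the two facet events $x_j=\pm1$ together) and bounds each facet probability by $2^{-r}$. The one-dimensional inequality needs no smoothing: since $\sum_i p_i=\sum_i(1-p_i)=r/2$, AM--GM gives $\prod_i p_i\le 2^{-r}$ and $\prod_i(1-p_i)\le 2^{-r}$ at once. If you keep the smoothing, note that ``fewest unequal pairs'' is not a monotone potential. Averaging the first two entries of $(x,y,x)$ gives $(m,m,x)$, which still has two unequal pairs. Minimizing $\sum_i p_i^2$ over the set of maximizers works instead.

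The genuine gap is the lower bound: you never exhibit a collectively balanced system attaining $d/2^{r-1}$. The route you sketch has three problems:
\begin{enumerate}
\item The shared label $L$ is unnecessary.
\item The parity scheme is left unspecified. In particular, it is unclear how $(X_i)_j$ could depend on bits other than $X_i$'s own without destroying independence.
\item Indexing coordinates by nonzero $w\in\mathbb F_2^{r-1}$ accounts for only $2^{r-1}-1$ of the $2^{r-1}$ antipodal classes. Reaching probability $1$ at $d=2^{r-1}$ in such a scheme requires all of them, including $\pm\mathbf 1$.
\end{enumerate}
Also, ``uniform on its support, hence centered'' is false for general supports. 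The missing idea is simply to make your sign table $s_{j,i}$ deterministic, which is the paper's construction. Choose columns $c^{(1)},\ldots,c^{(d)}\in\{-1,1\}^r$ representing distinct classes of $\{-1,1\}^r/\{\pm1\}$; if $d\ge 2^{r-1}$, use every class and repeat columns. Let $a_i\in\{-1,1\}^d$ be the $i$th row of this $r\times d$ matrix, and set $X_i=\varepsilon_i a_i$ with independent uniform signs $\varepsilon_i$. Each $X_i$ is uniform on the antipodal pair $\{\pm a_i\}$, so it is centered, and the $X_i$ are independent. All signs agree in coordinate $t$ if and only if $(\varepsilon_1,\ldots,\varepsilon_r)=\pm c^{(t)}$. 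These events are disjoint and each has probability $2^{1-r}$. Hence the common-facet probability is exactly $d/2^{r-1}$, and it equals $1$ once every class is used.
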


Thus $\pi_r(C_d)<1$ precisely when $r\geq \left\lfloor\log_2 d\right\rfloor+2$, and as we will see, 
Theorem~\ref{thm:c-main} will follow from Theorem~\ref{thm:pushed-kkms}, Theorem \ref{thm:cube-push} and a \textit{cubical pushing lemma}, Lemma \ref{lem:pushing}.

\subsection{Lower bounds and further polytopes}

Our logarithmic upper bounds are still far from the known lower bounds. In Proposition~\ref{prop:pairwise-obstruction}, we construct, for every $d\ge3$, a family of $2d$ axis-parallel boxes such that every pair has a common axis-parallel hyperplane transversal, but no orthogonal star meets the entire family. This gives $h_d,c_d\ge3$. Our construction arises from an explicit decomposition of $K_{2d}$ into $d$ interval graphs (Lemma~\ref{lem:ariadna-decomp}) and improves the size of an earlier construction by Bar\'at, Gy\'arf\'as, and S\'ark\"ozy \cite{BaratGyarfasSarkozy2025}. Together with a separate planar obstruction (Proposition~\ref{prop:two-colors-plane}) and our upper bounds, we obtain $h_3=h_4=h_5=h_6=3$ and $c_2=c_3=3$.

Our pushed theorems also apply beyond cubes. In Theorem~\ref{thm:facet-depth}, we bound $\pi_r(P)$ for any centered polytope in terms of its facet depths relative to the distinguished center. In particular, Corollary~\ref{cor:cs-push} shows that if $P=-P$, then
\[
\pi_r(P)\le\min\left\{1,\frac{f_{d-1}(P)}{2^r}\right\},
\]
where $f_{d-1}(P)$ denotes the number of facets.

\subsection{The full Kaiser--Rabinovich setting}

Kaiser and Rabinovich work more generally with $(n,D)$-bodies, ordered $n$-tuples of
convex subsets of $\R^D$: strong intersection means a common point in one component,
whereas weak intersection means piercing by one point in each component space
\cite{KaiserRabinovich1999}. In this setting, we establish two possible colorful versions. Put
\[
 L_n:=\left\lfloor\log_2(n+1)\right\rfloor+1,
 \qquad
 R_n:=\left\lfloor\log_2n\right\rfloor+2.
\]

\begin{theorem}\label{thm:KR-tradeoffs}
Let $n,D\ge1$.
\begin{enumerate}[label=(\roman*)]
\item With $(n+1)^D$ color classes of convex $(n,D)$-bodies, if every rainbow subfamily
of size at most $L_n^D$ is strongly intersecting, then one color class is weakly
intersecting.
\item With $R_n^D$ color classes of convex $(n,D)$-bodies, if every complete rainbow
choice is strongly intersecting, then one color class is weakly intersecting.
\end{enumerate}
\end{theorem}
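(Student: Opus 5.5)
Both parts follow the template already used for orthogonal stars, with the cube $C_n=[-1,1]^n$ replaced by the product polytope $P=(C_n)^D\cong [-1,1]^{nD}$, viewed as $D$ coordinate blocks. Let an $(n,D)$-body be $B=(B_1,\ldots,B_n)$ with $B_j\subseteq\R^D$ convex, and write the candidate weak transversal as $(z_1,\ldots,z_n)\in(\R^D)^n$. We parameterize candidates by points of $C_n^{D}$, one copy of $C_n$ for each coordinate direction $\ell\in[D]$. For each color class we build a KKMS-type cover $\{A^i_\sigma\}$ of this product. Faces of $(C_n)^D$ are products $\sigma=\sigma^{(1)}\times\cdots\times\sigma^{(D)}$ of cube faces. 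We put $x\in A^i_\sigma$ when some body $B\in\F_i$ is \emph{not} pierced by the candidate determined by $x$, and $\sigma$ records, coordinatewise, on which side of each $B_j$ the candidate $z_j$ lies in direction $\ell$: $+$, $-$, or free. This is the cubical pushing lemma (Lemma~\ref{lem:pushing}) applied in each direction separately. We let $A^i_{P}$ be the set where all of $\F_i$ is weakly pierced. Compactness and convexity of the $B_j$ give closedness, and the sign recording gives the KKMS condition on each face. This mirrors the one-dimensional ($D=1$) construction that yields Theorem~\ref{thm:c-main}.

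For (ii) we apply Theorem~\ref{thm:pushed-kkms} to $P=C_n^D$ with $r=R_n^D$ covers. The key step is the bound $\pi_r(C_n^D)<1$. My plan is to prove the product inequality $1-\pi(P\times Q)$ as a rainbow-pushability of products, namely $\pi_{r_1r_2}(P\times Q)\le$ something forcing $<1$ whenever $\pi_{r_1}(P)<1$ and $\pi_{r_2}(Q)<1$. Split $rr'$ colors into an $r_1\times r_2$ grid. A facet of $P\times Q$ is $F\times Q$ or $P\times G$, and being in a common facet of this type forces the projections to lie in a common facet of one factor. Rows then give independent tests on one factor and columns on the other.

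Iterating this $D$ times with $\pi_{R_n}(C_n)<1$ (Theorem~\ref{thm:cube-push}) gives $\pi_{R_n^D}(C_n^D)<1$. I expect the product inequality to be the main obstacle. The naive coupling only bounds the event ``some row lies in a facet'' by a union, so I would instead use the grid so that the event ``all $r$ points in one facet $F\times Q$'' requires \emph{every} row's projection to be in a common facet of $C_n$. I would then use independence across rows and the fact that zero-mean constraints hold row-wise after averaging. If this fails, the fallback is to compute $\pi_r(C_{nD})$ directly, but that only gives $\lfloor\log_2 nD\rfloor+2$ colors. So the grid argument is really needed to reach $R_n^D$.

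Given the selected faces, the common point $x\in\bigcap_i A^i_{\sigma_i}$, and faces not all in a common facet, suppose no class is weakly pierced by the candidate of $x$. The witnessing bodies $B^{(i)}$ then form a complete rainbow choice. Strong intersection gives a common point in some component $j$. That point sits on the side forced by every $\sigma_i$ in direction $\ell$ for that $j$, which places all $\sigma_i$ in the facet $\{x^{(\ell)}_j=\pm1\}$, a contradiction.

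For (i) we use the benchmark route of Section~\ref{sec:benchmark}. Frick--Zerbib (Theorem~\ref{thm:FZ}) applied to the $nD$-dimensional polytope would need $nD+1$ colors. Instead we take the product of $D$ independent $n$-dimensional applications arranged in an $(n+1)^{\times D}$ grid, with $p=0$ in each factor. This produces rainbow subfamilies of size $L_n$ per factor whose failures must be simultaneous, and invoking Kaiser--Rabinovich per factor yields rainbow tests of size $L_n^D$.
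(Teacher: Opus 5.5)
Your plan breaks at its first step, the cubical cover on $(C_n)^D$, and no pushability estimate can repair it.

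For $D=1$ the encoding works because an interval missing $z_j$ lies strictly on one side of it. That fixes a sign and lets Lemma~\ref{lem:pushing} run. For $D\ge2$ this fails in two ways:
\begin{enumerate}[label=(\alph*)]
\item A convex $B_j\subseteq\R^D$ missing $z_j$ need not be separated from $z_j$ by any axis-parallel hyperplane. For example, the segment from $(-1,2)$ to $(2,-1)$ misses $0$, although both of its coordinate projections contain $0$. Such a body determines no sign in any coordinate. So either the KKMS condition fails there, or its label is $P$ itself, which lies in no facet.
\item Even when separations exist, two bodies sharing a point $w$ in component $j$ may be separated from $z_j$ in different directions $\ell\ne\ell'$. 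Their labels then fix different coordinates and need not share a facet.
\end{enumerate}
So your final step, ``which places all $\sigma_i$ in the facet $\{x^{(\ell)}_j=\pm1\}$'', is unjustified.

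A sanity check shows the framework cannot be patched. Since $(C_n)^D$ is just $C_{nD}$, Theorem~\ref{thm:cube-push} already gives $\pi_r<1$ for $r=\lfloor\log_2(nD)\rfloor+2\le R_n^D$. Your ``fallback'' is therefore a \emph{stronger} bound, not a weaker one, and no product inequality is needed. If your cover and pushing step were valid, three colors would suffice for $n=1$, $D=3$. That is false: let every class consist of the four facets of a tetrahedron in $\R^3$. Any three facets share a vertex, so every rainbow triple meets, yet no class has a common point. Your part (i) relies on the same cover, and it only gestures at how ``failures must be simultaneous''.

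The missing idea is to reduce dimension on the geometric side. The paper does this with a colorful form of Kaiser--Rabinovich submultiplicativity: $\mathsf C(n,D_1;m_1,k_1)$ and $\mathsf C(n,D_2;m_2,k_2)$ imply $\mathsf C(n,D_1+D_2;m_1m_2,k_1k_2)$. The argument runs as follows.
\begin{enumerate}[label=(\arabic*)]
\item Split the colors into $m_1$ blocks of $m_2$ colors each.
\item For every rainbow $\mathcal G$ of size at most $k_2$ inside one block, form the meta-body whose $i$th component is the projection to $\R^{D_1}$ of $\bigcap_{K\in\mathcal G}K_i$.
\item A rainbow choice of at most $k_1$ meta-bodies comes from a rainbow original family of size at most $k_1k_2$, so it is strongly intersecting. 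The $D_1$-statement then gives a block whose meta-class is weakly pierced by some $p_1,\ldots,p_n\in\R^{D_1}$.
\item Restricting that block's bodies to the fibers over the $p_i$ gives $(n,D_2)$-bodies satisfying the $D_2$-hypothesis. The resulting piercing points $q_i$ combine with the $p_i$ into $(p_i,q_i)$.
\end{enumerate}
The cube, pushability and Lemma~\ref{lem:KR} enter only in the one-dimensional inputs $\mathsf C(n,1;n+1,L_n)$ and $\mathsf C(n,1;R_n,R_n)$, with a compatibility tweak for empty components. Iterating the product then gives (i) and (ii). Your grid of colors has the right combinatorial shape, but it needs this projection/fiber mechanism, not a product of cubes.
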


The first statement has exactly the same local test size $L_n^D$ as the original
Kaiser--Rabinovich upper bound for the monochromatic Helly-like number.  The second is the
higher-dimensional descendant of Theorem~\ref{thm:c-main}: the number of external colors
is itself only polylogarithmic in $n$ for fixed $D$.  Section~\ref{sec:actual-KR} proves
both statements from a colorful form of Kaiser--Rabinovich dimension submultiplicativity.

\subsection{Organization of the paper}

Section~\ref{sec:benchmark} develops the cubical encoding and the Frick--Zerbib/
Kaiser--Rabinovich benchmark.  Sections~\ref{sec:pushability} and \ref{sec:computing}
develop rainbow pushability and prove the main few-color theorem.  The remaining sections
treat separated $d$-intervals and lower constructions, the full Kaiser--Rabinovich setting,
and open problems. We defer technical topological and compactness details to the appendices.

\section{The cubical benchmark: encoding and pushing}\label{sec:benchmark}

We first record the elementary dictionary that will be used repeatedly below.

\begin{proposition}[Transversal dictionary]\label{prop:dictionary}
Let $\F=\{F_j:j\in J\}$ be a finite family of nonempty connected subsets of $\R^d$.
For $i\in[d]$ and $j\in J$, put $I_i^j$ as projection of $F_j$ to the $i$-th coordinate and
\[
 B_j:=I_1^j\times\cdots\times I_d^j,
\]
and let $G_i$ be the interval graph on vertex set $J$ represented by the intervals
$\{I_i^j:j\in J\}$.

For every $S\subseteq J$, the following are equivalent:
\begin{enumerate}[label=(\alph*)]
\item $\{F_j:j\in S\}$ has a common axis-parallel hyperplane transversal;
\item $\{B_j:j\in S\}$ has a common axis-parallel hyperplane transversal;
\item for some $i\in[d]$, $\bigcap_{j\in S} I_i^j\ne\varnothing$;
\item $S$ is a clique of $G_i$ for some $i\in[d]$.
\end{enumerate}
Moreover, the following are equivalent:
\begin{enumerate}[label=(\alph*)]
\item $\F$ has an orthogonal star transversal;
\item $\{B_j:j\in J\}$ has an orthogonal star transversal;
\item there are points $x_1,\ldots,x_d\in\R$ such that
\[
 J=\bigcup_{i=1}^d\{j\in J:x_i\in I_i^j\};
\]
\item there are possibly empty cliques $Q_i$ of $G_i$, $i\in[d]$, with
\[
 J=Q_1\cup\cdots\cup Q_d.
\]
\end{enumerate}
\end{proposition}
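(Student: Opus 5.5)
The whole argument rests on one fact: for a nonempty connected set $F\subseteq\R^d$, the projection $\pi_i(F)$ onto the $i$-th coordinate is a nonempty connected subset of $\R$, hence an interval $I_i^j$ (not necessarily closed; if we want compact intervals we use compactness, but no closedness is needed for the argument). Together with the observation that a hyperplane $H_i(c)$ meets a set $F$ exactly when $c\in\pi_i(F)$, every equivalence becomes a statement about intervals on the lines $\R$. So the plan is to prove two basic facts first and then read off both chains.

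\emph{Basic facts.} (1) $H_i(c)\cap F_j\neq\varnothing$ if and only if $c\in I_i^j$. This is immediate from the definition of projection. (2) $H_i(c)\cap B_j\neq\varnothing$ if and only if $c\in I_i^j$. Here we use that $B_j$ is a product of nonempty sets: given $c\in I_i^j$, we pick arbitrary coordinates in the other factors.

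\emph{First chain.} Statement (a) says there are $i$ and $c$ with $H_i(c)$ meeting every $F_j$, $j\in S$. By (1) this holds if and only if $c\in\bigcap_{j\in S}I_i^j$ for some $i$, which is (c). By (2) the same argument shows (b)$\iff$(c). For (c)$\iff$(d) we invoke the Helly property of intervals in $\R$ (Helly in dimension one): a finite family of intervals has a common point if and only if it is pairwise intersecting. Pairwise intersecting is exactly the statement that $S$ is a clique of $G_i$. The only care needed is the degenerate case $|S|\le1$, and the fact that the intervals may be non-closed. Helly for finitely many convex sets in $\R$ needs no closedness: the maximum of the left endpoints and the minimum of the right endpoints are attained among finitely many intervals, and pairwise intersection of the intervals achieving them gives the common point. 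For $S=\varnothing$ everything holds trivially, using $J$ nonempty or the convention that any hyperplane works.

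\emph{Second chain.} $S_x$ meets $F_j$ if and only if some $H_i(x_i)$ meets $F_j$, which by (1) means $x_i\in I_i^j$ for some $i$. So (a)$\iff$(c), and the same argument with (2) gives (b)$\iff$(c). For (c)$\Rightarrow$(d) we set $Q_i=\{j: x_i\in I_i^j\}$; the intervals in $Q_i$ share the point $x_i$, so $Q_i$ is a clique. For (d)$\Rightarrow$(c) we apply the one-dimensional Helly step from the first chain to each nonempty $Q_i$ to obtain a point $x_i\in\bigcap_{j\in Q_i}I_i^j$, and choose $x_i$ arbitrarily when $Q_i=\varnothing$.

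The only step requiring any thought is the one-dimensional Helly argument for possibly non-closed intervals, together with bookkeeping for empty cliques. I expect no real obstacle: the proof is a direct unwinding of definitions.
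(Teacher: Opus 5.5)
Your proposal is correct and follows the paper's proof (Appendix~\ref{app:dictionary}) essentially step for step: both reduce everything to the fact that $H_i(a)$ meets $F_j$, or $B_j$, exactly when $a\in I_i^j$, and then use the one-dimensional Helly property both for (c)$\iff$(d) and to recover the points $x_i$ from nonempty cliques $Q_i$. Your extra care about non-closed intervals and empty cases makes explicit what the paper leaves implicit. In the endpoint argument, ties between an open and a closed endpoint at the same location should be broken toward the more restrictive interval.
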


The proof is elementary and is included in Appendix~\ref{app:dictionary}.  In particular,
both transversal notions depend only on the coordinate boxes $B_j$.

We now establish $h_d\leq k_d$. In this setting, we are using $d+1$ colors. The proof is short once the geometry of orthogonal stars is encoded in the cube, but the encoding also exposes the ``pushing'' phenomenon that will motivate the general parameter of the next section. Since the families are finite and compact, after replacing each member by its coordinate box and rescaling we may assume that every set under consideration is contained in $C_d=[-1,1]^d$.

For $x=(x_1,\ldots,x_d)\in C_d$ and $v=(v_1,\ldots,v_d)\in V(C_d)=\{-1,1\}^d$,
define
\[
 R_x^v:=\{y\in C_d:(x_i-y_i)v_i>0\text{ for every }i\in[d]\}.
\]
Thus $R_x^v$ is the open orthant of $C_d\setminus S_x$ opposite to the vertex $v$.
If $x$ lies on a face $G$ of $C_d$ and $v\notin V(G)$, then $R_x^v=\varnothing$.

For a color class $\F_j$ and $v\in V(C_d)$ set
\[
 A_v^j:=\{x\in C_d:\text{there exists }F\in\F_j\text{ with }F\subset R_x^v\}.
\tag{2.1}
\]
These sets are open.  If for some $x$ we have $x\notin\bigcup_vA_v^j$, then $S_x$ is
already transversal to $\F_j$: otherwise a set $F\in\F_j$ disjoint from $S_x$ has every
coordinate projection entirely on one side of $x_i$, and hence lies in some $R_x^v$.
Therefore, if $\F_j$ has no orthogonal star transversal, then
\[
 C_d=\bigcup_{v\in V(C_d)}A_v^j.
\]
Moreover,
\[
 G\subseteq\bigcup_{v\in V(G)}A_v^j
\]
for every face $G$.  Thus every color class without an orthogonal star transversal
produces a cubical KKM cover.

The next elementary observation is the bridge between the local transversal hypothesis
and the combinatorics of cube facets.

\begin{lemma}[Cubical pushing lemma]\label{lem:pushing}
Let $x\in C_d$, let $v_1,\ldots,v_m\in V(C_d)$, and let
$F_i\subset R_x^{v_i}$ be nonempty.  If one axis-parallel hyperplane is transversal to
$F_1,\ldots,F_m$, then $v_1,\ldots,v_m$ lie on a common facet of $C_d$.
\end{lemma}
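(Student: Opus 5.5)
The plan is a direct coordinate computation. Suppose the common transversal is the axis-parallel hyperplane $H_i(c)$ for some $i\in[d]$ and $c\in\R$. I will show that all the $v_k$ share the same $i$-th coordinate. That suffices, since the facet $\{y\in C_d: y_i=\varepsilon\}$ contains exactly the vertices with $i$-th coordinate $\varepsilon\in\{-1,1\}$.

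First I unpack membership in the open orthant. Since $F_k\subset R_x^{v_k}$ is nonempty, every $y\in F_k$ satisfies $(x_i-y_i)(v_k)_i>0$. So if $(v_k)_i=1$, then $y_i<x_i$ for all $y\in F_k$, and if $(v_k)_i=-1$, then $y_i>x_i$ for all $y\in F_k$. Next, transversality gives a point $y^{(k)}\in F_k\cap H_i(c)$, so $y^{(k)}_i=c$ for every $k$.

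Now suppose two indices $k,l$ had $(v_k)_i=1$ and $(v_l)_i=-1$. Then $c=y^{(k)}_i<x_i$ and $c=y^{(l)}_i>x_i$, which is a contradiction. Hence $(v_1)_i=\cdots=(v_m)_i=:\varepsilon$, and all $v_k$ lie in the facet $\{y_i=\varepsilon\}$ of $C_d$.

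There is no real obstacle here. The only points needing care are the nonemptiness of the $F_k$, which ensures the transversal actually meets each set, and the strict inequalities in the definition of $R_x^v$. Both are used exactly once, in the contradiction step. Note also that $c=x_i$ is impossible, since each $F_k$ is strictly on one side of $x_i$ in the $i$-th coordinate; this causes no problem, as the argument only uses the strict inequalities. The case $m=1$ is trivial, since every vertex lies in some facet.
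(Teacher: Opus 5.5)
Your proof is correct and is essentially the paper's argument: choosing a point of each $F_k$ on the transversal $H_i(c)$ gives $(x_i-c)(v_k)_i>0$ for every $k$. Hence all $v_k$ have the same $i$-th sign and lie in a common facet; the paper writes this as one sign identity rather than your two-index contradiction, but the content is the same.
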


\begin{proof}
Let the transversal be $H_k(a)=\{y:y_k=a\}$.  Choose $p_i\in F_i\cap H_k(a)$.  Since
$p_i\in R_x^{v_i}$, the $k$-th coordinates $v_{i,k}$ and $p_{i,k}$ of $v_i$ and $p_i$ satisfy
\[
 (x_k-a)v_{i,k}=(x_k-p_{i,k})v_{i,k}>0
\]
for every $i$.  Hence all $v_{i,k}$ have the same sign, so all $v_i$ lie in the facet
$z_k=1$ or all lie in the facet $z_k=-1$.
\end{proof}

We now combine this with two additional ingredients. The first is
Lemma~4.3 of Kaiser and Rabinovich \cite{KaiserRabinovich1999}.

\begin{lemma}\label{lem:KR}
Let $S\subseteq\{-1,1\}^d$ satisfy $0\in\conv S$.  Then there is a subset $T\subseteq S$
with
\[
 |T|\le\left\lfloor\log_2(d+1)\right\rfloor+1
\]
which is not contained in any facet of $C_d$.  Equivalently, for every coordinate, the
vectors of $T$ exhibit both signs.
\end{lemma}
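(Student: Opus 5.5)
The plan is a greedy halving argument driven by a fixed convex combination witnessing $0\in\conv S$. Write $0=\sum_{s\in S}\lambda_s s$ with $\lambda_s\ge0$ and $\sum\lambda_s=1$, and replace $S$ by the support $S^+=\{s:\lambda_s>0\}$. Every vector chosen below will come from $S^+$; the positivity of its weight is what makes the halving strict.

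We build $T=\{t_1,t_2,\ldots\}\subseteq S^+$ step by step. After choosing $t_1,\ldots,t_j$, let $U_j\subseteq[d]$ be the set of \emph{unbalanced} coordinates, i.e.\ those on which $t_1,\ldots,t_j$ all share a sign. For $i\in U_j$ let $\eps_i$ denote that common sign. Start with any $t_1\in S^+$, so $U_1=[d]$. If $U_j=\varnothing$ we stop, since then every coordinate exhibits both signs among $T$.

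The key step is to pass from $U_j\neq\varnothing$ to $U_{j+1}$. Consider the linear functional $\phi(s)=\sum_{i\in U_j}\eps_i s_i$. Since $0=\sum\lambda_s s$, we have $\sum_{s\in S^+}\lambda_s\phi(s)=0$. Each already chosen $t_\ell$ has $\phi(t_\ell)=|U_j|>0$ and weight $\lambda_{t_\ell}>0$. Hence the remaining vectors satisfy
\[
\sum_{s\in S^+\setminus\{t_1,\ldots,t_j\}}\lambda_s\phi(s)<0 ,
\]
so some $s\in S^+$ has $\phi(s)<0$. Choose it as $t_{j+1}$. The condition $\phi(t_{j+1})<0$ means $t_{j+1}$ disagrees with $\eps$ on strictly more than half of $U_j$. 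It therefore agrees on at most $\lceil |U_j|/2\rceil-1=\lfloor(|U_j|-1)/2\rfloor$ coordinates, and these are exactly the coordinates of $U_{j+1}$. Thus
\[
|U_{j+1}|\le\left\lfloor\frac{|U_j|-1}{2}\right\rfloor .
\]
Strictness is precisely where we need the chosen vectors to lie in the support. Without it we would only get $\lfloor|U_j|/2\rfloor$, which costs one extra vector in general and gives $\lfloor\log_2 d\rfloor+2$ instead.

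It remains to count the steps. Put $m_j=|U_j|+1$. The recursion becomes $m_{j+1}\le\lfloor m_j/2\rfloor$, with $m_1=d+1$, and we stop when $m_j=1$. Repeated halving with floors takes $d+1$ to $1$ in exactly $\lfloor\log_2(d+1)\rfloor$ steps, so after at most $\lfloor\log_2(d+1)\rfloor$ further choices $U$ is empty. Hence
\[
|T|\le\lfloor\log_2(d+1)\rfloor+1 .
\]
Finally, the two formulations in the statement agree: the facets of $C_d$ are $\{z_k=\pm1\}$, so $T$ lies in no facet iff each coordinate takes both signs on $T$. The main obstacle is obtaining the strict inequality in the halving step, and the weight-positivity argument above handles it.
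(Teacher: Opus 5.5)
Your proof is correct. On every previously chosen vector the functional satisfies $\phi(t_\ell)=|U_j|$, so the positive weights force some $s\in S^+$ with $\phi(s)<0$, and the recursion $m_{j+1}\le\lfloor m_j/2\rfloor$ starting from $m_1=d+1$ gives exactly the stated count.

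The paper gives no proof of this lemma. It imports it as Lemma~4.3 of Kaiser--Rabinovich and uses it as a black box in Theorem~\ref{thm:h-benchmark}. Your greedy halving argument therefore supplies a self-contained proof where the paper has only a citation.

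A small refinement: you need slightly less than you claim. Every chosen vector agrees with $t_1$ on $U_j$, so $\phi(s)=\sum_{i\in U_j}t_{1,i}s_i$ and $\phi(t_1)=|U_j|$ at every stage. Hence $\lambda_{t_1}>0$ alone already makes the remaining weighted sum negative, and each later $t_{j+1}$ automatically comes from the support.

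It is also worth comparing with the paper's own tools. Theorem~\ref{thm:pushed-caratheodory}, applied with $X_1=\cdots=X_r=S$ and combined with Theorem~\ref{thm:cube-push}, reproves the lemma, but only with $|T|\le\lfloor\log_2 d\rfloor+2$. The sign-matrix construction in the proof of Theorem~\ref{thm:cube-push} shows that this weaker bound cannot be improved once the sets $X_i$ may differ from color to color. Your argument gains the extra unit, which matters unless $d+1$ is a power of $2$, because all choices come from one balanced set and the single positive weight $\lambda_{t_1}$ is reused at every step. This is exactly the strictness you isolated. The pushability route gives up that unit but, in exchange, is the one that survives in the genuinely colorful few-color setting of Theorem~\ref{thm:c-main}.
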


The second is the Theorem~\ref{thm:FZ}, specialized to vertex-indexed
covers of the cube. We can now prove our benchmark result.

\begin{theorem}\label{thm:h-benchmark}
For every $d\ge1$,
\[
 h_d\le\left\lfloor\log_2(d+1)\right\rfloor+1.
\]
Equivalently, let $k_d=\lfloor\log_2(d+1)\rfloor+1$.  If
$\F_1,\ldots,\F_{d+1}$ are finite nonempty families of compact connected subsets of
$\R^d$ such that every rainbow subfamily of size at most $k_d$ has a common
axis-parallel hyperplane transversal, then some color class has an orthogonal star
transversal.
\end{theorem}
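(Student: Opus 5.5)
We argue by contradiction: suppose no color class $\F_j$, $j\in[d+1]$, has an orthogonal star transversal. After replacing each set by its coordinate box $B_j$ (Proposition~\ref{prop:dictionary}) and rescaling, all sets lie in the interior of $C_d$. For each color $j$, the open sets $A_v^j$ of (2.1), indexed by vertices $v\in V(C_d)$, form a cubical KKM cover: $G\subseteq\bigcup_{v\in V(G)}A_v^j$ for every face $G$. We extend this to a polytopal KKMS cover by setting $A_\sigma^j=\varnothing$ for every face $\sigma$ of dimension at least one and $A_{\{v\}}^j=A_v^j$. Since $\varnothing\ne\sigma\subseteq G$ ranges over all faces of $G$, including its vertices, the covering condition of Theorem~\ref{thm:FZ} holds for open covers.

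Next we apply Theorem~\ref{thm:FZ} with $P=C_d$, $p=0$, and $y_\sigma^j$ chosen as any point of $\sigma$, taking $y_{\{v\}}^j=v$. It yields faces $\sigma_1,\ldots,\sigma_{d+1}$ with $\bigcap_j A_{\sigma_j}^j\neq\varnothing$. Since only vertex sets are nonempty, each $\sigma_j=\{v_j\}$ is a vertex, $0\in\conv\{v_1,\ldots,v_{d+1}\}$, and there is a point $x\in\bigcap_j A_{v_j}^j$. By (2.1), for each $j$ there is $F_j\in\F_j$ with $F_j\subset R_x^{v_j}$; each such $F_j$ is nonempty because its family member is nonempty.

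Now Lemma~\ref{lem:KR} applied to $S=\{v_1,\ldots,v_{d+1}\}$ gives $T\subseteq S$ with $|T|\le k_d$ not contained in any facet of $C_d$. Choose one color index $j$ for each element of $T$; if several $v_j$ coincide, one index suffices. The corresponding sets $\{F_j\}$ form a rainbow subfamily of size at most $k_d$, so by hypothesis they have a common axis-parallel hyperplane transversal. By the cubical pushing lemma (Lemma~\ref{lem:pushing}), the vertices $T$ then lie on a common facet, which is a contradiction.

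The main care points are technical rather than conceptual. First, we must justify that Theorem~\ref{thm:FZ} accepts the degenerate KKMS cover with empty sets on higher faces, which is legitimate since the condition only asks for a union covering $G$. Second, the openness of the $A_v^j$ requires the members to be compact, so that a positive gap to the star persists under small perturbations of $x$. Third, the rescaling must put every set in $\operatorname{int} C_d$, so that points $x$ on the boundary behave correctly, meaning $R_x^v=\varnothing$ when $v\notin V(G)$. I expect this openness and boundary bookkeeping to be the only real obstacle.
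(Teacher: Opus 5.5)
Your proposal is correct and follows the paper's proof essentially step for step: you build the open vertex-indexed KKM covers (2.1) and extend them by empty sets on higher faces, apply Theorem~\ref{thm:FZ} with $p=0$ and $y_v^j=v$, then use Lemma~\ref{lem:KR} on the distinct selected vertices and the cubical pushing lemma to reach the contradiction. One minor remark: rescaling into $\operatorname{int}C_d$ is harmless but unnecessary, since $R_x^v\subseteq C_d$ by definition already forces $R_x^v=\varnothing$ whenever $x$ lies on a face $G$ with $v\notin V(G)$.
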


\begin{proof}
Assume no color class has an orthogonal star transversal.  The $d+1$ families therefore produce the open vertex-indexed KKM covers (2.1).  To apply Theorem~\ref{thm:FZ}, set $A_\sigma^j=\varnothing$ for every nonvertex face $\sigma$, take $p=0$, and prescribe $y_v^j=v$ for vertices $v$ (the representatives of nonvertex faces may be chosen arbitrarily).  Since the selected sets must have nonempty common intersection, the theorem selects only vertices.  We therefore obtain a common point $x$, labels $v_1,\ldots,v_{d+1}$ with $0\in\conv\{v_1,\ldots,v_{d+1}\}$, and witnesses $F_j\in\F_j$ with $F_j\subset R_x^{v_j}$.  Discard repeated vertices and apply Lemma~\ref{lem:KR}.  It gives at most $\lfloor\log_2(d+1)\rfloor+1$ distinct vertices which are not contained in a common facet. The corresponding witnesses form a rainbow subfamily.  By hypothesis they have a common axis-parallel hyperplane transversal, so Lemma~\ref{lem:pushing} puts their labels in a common facet, a contradiction.
\end{proof}

This proof reveals that the geometric application ultimately needs only
a rainbow selection escaping every common facet.

\section{Rainbow pushability and pushed colorful convexity}\label{sec:pushability}

\subsection{The parameter}

A \emph{centered polytope} is a polytope $P\subset\R^d$ with $0\in\operatorname{int}P$.  We write $V(P)$ for its vertex set.

For probability measures $\mu_1,\ldots,\mu_r$ on $V(P)$, let $V_1,\ldots,V_r$ be independent random vertices with $V_i\sim\mu_i$.  We call the measures \emph{collectively balanced} if
\[
 \sum_{i=1}^r \mathbb E V_i=0.
\tag{3.1}
\]
Such tuples of measures always exist: since $0\in\conv V(P)$, take the same zero-mean measure in every color.

\begin{definition}[Rainbow pushability]\label{def:pushability}
The \emph{$r$-rainbow pushability} of a centered polytope $P$ is
\[
 \pi_r(P):=
 \max_{\substack{\mu_1,\ldots,\mu_r\text{ on }V(P)\\
                  \sum_i\mathbb EV_i=0}}
 \Pr\bigl(V_1,\ldots,V_r\text{ lie in a common facet of }P\bigr).
\tag{3.2}
\]
\end{definition}

Although the definition is phrased probabilistically, the underlying selection problem is entirely deterministic: the measures are simply convex weights on the vertices.  The probabilistic language is just useful, as it allows both structural properties and the calculation of the parameter to be expressed cleanly.

The maximum exists because the feasible set is a compact polytope in a finite product of probability simplices and the objective is continuous.  The parameter is invariant under invertible linear maps fixing the origin: if $T\in\mathrm{GL}(d)$, then $\pi_r(TP)=\pi_r(P)$.  The meaning of $\pi_r(P)<1$ is simple: every collectively balanced system of $r$ independent vertex distributions admits at least one rainbow outcome that escapes all facets.

\subsection{Equivalent definitions and pushed colorful Carath\'eodory}

Although Definition~\ref{def:pushability} is finite and vertex valued, the same number is associated to distributions on the boundary and on the whole polytope.  We use only finitely supported distributions below, as this is sufficient for all applications in this paper.

\begin{proposition}\label{prop:vertex-reduction}
Define $\pi_r^\partial(P)$ and $\pi_r^{\mathrm{poly}}(P)$ by the analogous suprema,
but allowing the independent random variables to be finitely supported on
$\partial P$ and on $P$, respectively.  Then
\[
 \pi_r(P)=\pi_r^\partial(P)=\pi_r^{\mathrm{poly}}(P).
\]
\end{proposition}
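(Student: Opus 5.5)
The inclusions $V(P)\subseteq\partial P\subseteq P$ give $\pi_r(P)\le\pi_r^\partial(P)\le\pi_r^{\mathrm{poly}}(P)$ at once, since every feasible system for a smaller ground set is feasible for the larger one and the objective is the same event. So the plan is to prove $\pi_r^{\mathrm{poly}}(P)\le\pi_r(P)$: starting from any collectively balanced system of finitely supported independent random points $X_1,\ldots,X_r$ in $P$, I will build vertex-valued random variables $V_1,\ldots,V_r$ with $\mathbb E V_i=\mathbb E X_i$ and
\[
\Pr(V_1,\ldots,V_r\text{ in a common facet})\ge\Pr(X_1,\ldots,X_r\text{ in a common facet}).
\]

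The construction is a randomized rounding. For each point $x\in P$, let $G(x)$ be the unique face of $P$ containing $x$ in its relative interior. Fix a probability measure $\nu_x$ on $V(G(x))$ with barycenter $x$; one exists because $x\in G(x)=\conv V(G(x))$. Define $V_i$ by first sampling $X_i$ and then, given $X_i=x$, sampling a vertex from $\nu_x$ independently of everything else. Then $V_i$ takes values in $V(P)$ with finite support, $\mathbb E V_i=\mathbb E X_i$ by the tower property, and the $V_i$ remain independent. Hence $\mu_i:=\operatorname{law}(V_i)$ is collectively balanced.

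The key monotonicity step is the following. Suppose the realized points $x_1,\ldots,x_r$ all lie in a facet $F$. A face of $P$ meeting the face $F$ in a relative interior point is contained in $F$, since $F$ is exposed by a supporting hyperplane and the face $G(x_i)$ would otherwise cross it. So $G(x_i)\subseteq F$ for each $i$, and every vertex chosen by $\nu_{x_i}$ lies in $F$. Therefore, conditionally on the $X$'s, the event ``$X$'s in a common facet'' implies ``$V$'s in a common facet'' almost surely. Taking expectations gives the displayed inequality, and taking the supremum over systems yields $\pi_r^{\mathrm{poly}}(P)\le\pi_r(P)$.

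This also shows that the suprema are attained as maxima, matching Definition~\ref{def:pushability}.

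The only point I expect to need care is the face-containment fact: if $x\in\relint G$ and $x\in F$ with $F$ a face, then $G\subseteq F$. I would prove it with a linear functional $\ell$ maximized exactly on $F$. Since $\ell|_G$ attains its maximum over $G$ at the relative interior point $x$, the affine function $\ell|_G$ must be constant on $G$, so $G\subseteq F$. Everything else is routine bookkeeping with finitely supported distributions, which keeps measurability trivial.
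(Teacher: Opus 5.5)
Your proposal is correct and follows essentially the same route as the paper's proof: you round each $X_i=x$ to a random vertex of the minimal face $G(x)$ with barycenter $x$, which preserves expectations and independence. Your argument that $x\in\relint G(x)\cap F$ forces $G(x)\subseteq F$ is the same fact the paper attributes to minimality of $G_x$, so every realization lying in a common facet stays in that facet after rounding.
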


\begin{proof}
The inequalities
\[
 \pi_r(P)\le\pi_r^\partial(P)\le\pi_r^{\mathrm{poly}}(P)
\]
are immediate.  For the reverse inequality, let $X_1,\ldots,X_r$ be independent,
finitely supported $P$-valued random variables with $\sum_i\mathbb EX_i=0$.  For every
point $x$ in the support of $X_i$, let $G_x=\supp(x)$ be the minimal face of $P$
containing $x$.  Since $x\in\relint G_x$, choose a convex representation
\[
 x=\sum_{v\in V(G_x)}\lambda_v(x)v
\]
with positive coefficients.  Conditional on $X_i=x$, replace $x$ by a random vertex
$\widehat X_i\in V(G_x)$ with probabilities $\lambda_v(x)$.  Then
\[
 \mathbb E(\widehat X_i\mid X_i=x)=x,
 \qquad\text{hence}\qquad
 \mathbb E\widehat X_i=\mathbb EX_i.
\]
Thus the refined vertex distributions remain collectively balanced.

If $x$ lies in a facet $F$, minimality of $G_x$ gives $G_x\subseteq F$.  Hence every
possible vertex refinement of $x$ also lies in $F$.  Consequently, whenever
$X_1,\ldots,X_r$ lie in a common facet, their vertex refinements do so as well.  Thus
refinement cannot decrease the probability of lying in a common facet, while the refined
vertex distributions have probability at most $\pi_r(P)$.  Taking the supremum proves
$\pi_r^{\mathrm{poly}}(P)\le\pi_r(P)$.
\end{proof}

The geometric meaning of escaping all facets is also simple.

\begin{lemma}\label{lem:relint-criterion}
For points $x_1,\ldots,x_r\in P$, the following are equivalent:
\begin{enumerate}[label=(\roman*)]
\item $x_1,\ldots,x_r$ are not contained in a common proper face of $P$;
\item $x_1,\ldots,x_r$ are not contained in a common facet of $P$;
\item
\[
 \relint\conv\{x_1,\ldots,x_r\}\subseteq\operatorname{int}P.
\]
\end{enumerate}
\end{lemma}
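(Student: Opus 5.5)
I will prove the cycle (i)$\Rightarrow$(ii)$\Rightarrow$(iii)$\Rightarrow$(i), using only the facet description $P=\{y:\langle a_F,y\rangle\le b_F\text{ for every facet }F\}$ with $b_F>0$ (since $0\in\operatorname{int}P$). I will also use two standard facts: every proper face of a full-dimensional polytope is contained in some facet, and $\operatorname{int}P$ is the set where all facet inequalities are strict.

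\emph{(i)$\Rightarrow$(ii).} This is immediate, because every facet is a proper face.

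\emph{(ii)$\Rightarrow$(iii).} Let $z\in\relint\conv\{x_1,\ldots,x_r\}$. Then $z=\sum_i\lambda_i x_i$ for some coefficients with every $\lambda_i>0$ and $\sum_i\lambda_i=1$. (This is the standard description of the relative interior of the convex hull of finitely many points: all points appear with strictly positive weight.) Fix a facet $F$ with inequality $\langle a_F,\cdot\rangle\le b_F$. By (ii), some $x_j\notin F$, so $\langle a_F,x_j\rangle<b_F$. Since each $\langle a_F,x_i\rangle\le b_F$ and $\lambda_j>0$, we get $\langle a_F,z\rangle<b_F$. This holds for every facet, so $z\in\operatorname{int}P$.

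\emph{(iii)$\Rightarrow$(i).} I will argue by contrapositive. Suppose all $x_i$ lie in a proper face $G$. Then $G$ is contained in a facet $F$, which lies in the hyperplane $\langle a_F,\cdot\rangle=b_F$. That hyperplane is convex, so $\conv\{x_1,\ldots,x_r\}\subseteq F\subseteq\partial P$. The relative interior of a nonempty convex set is nonempty, so it contains a point of $\partial P$, and (iii) fails.

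The only step needing care is the positive-weight description of the relative interior used in (ii)$\Rightarrow$(iii). If that fact is unavailable, I will replace it by a direct argument. Take $z$ in the relative interior and the barycenter $c=\frac1r\sum_i x_i$. Extend the segment from $c$ through $z$ slightly beyond $z$ while staying in the hull; this writes $z$ as a convex combination with positive weight on $c$, and hence positive weight on every $x_i$.
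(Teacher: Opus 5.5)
Your proof is correct, and its overall structure matches the paper's. Both prove (i)$\Leftrightarrow$(ii) because facets are proper faces and proper faces lie in facets. Both handle the easy direction the same way: if all $x_i$ lie in a proper face, the hull lies in $\partial P$. You are slightly more careful there than the paper, since you note that $\relint$ of a nonempty convex set is nonempty.

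The difference is in the substantive implication.

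\emph{The paper's route.} It argues by contrapositive from a single boundary point. If some $z\in\relint\conv\{x_1,\ldots,x_r\}$ lies on $\partial P$, it takes a supporting functional $\ell\le 1$ with $\ell(z)=1$. A ray-extension argument then shows $\ell\equiv 1$ on the whole hull, so all $x_i$ lie in the exposed proper face $P\cap\{\ell=1\}$. This needs neither the facet description of $P$ nor any description of $\relint$.

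\emph{Your route.} You go directly from (ii) to (iii), facet by facet. You combine the positive-weight characterization of $\relint$ of a finite hull with the fact that $\operatorname{int}P$ is cut out by the strict facet inequalities. This uses a bit more standard machinery, but makes the mechanism transparent: each facet inequality becomes strict at $z$ because some $x_j$ off that facet carries positive weight.

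Your fallback proof of the positive-weight lemma is exactly the paper's ray-extension trick, so the two proofs share the same core idea, just deployed at different points.
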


\begin{proof}
Every proper face lies in a facet, so (i) and (ii) are equivalent.  If all selected points
lie in a proper face, their convex hull lies in $\partial P$, so (iii) fails.  Conversely,
suppose a point $z$ of the relative interior of their convex hull lies on $\partial P$.
Choose a supporting functional $\ell\le1$ on $P$ with $\ell(z)=1$.  If some other point $z'$ of the selected convex hull satisfied $\ell(z')<1$, then,
since $z$ lies in the relative interior, the ray from $z'$ through $z$ could be continued
slightly past $z$ while remaining in the convex hull.  At such a point $z''$ we would
have $\ell(z'')>1$, contradicting $\ell\le1$ on $P$.  Hence $\ell=1$ throughout the
convex hull, which therefore lies in the proper face $P\cap\{\ell=1\}$.
\end{proof}

\begin{proposition}\label{prop:pushed-selection}
Let $P\subset\R^d$ be centered and suppose $\pi_r(P)<1$.  Let
$X_1,\ldots,X_r\subseteq P$ be nonempty sets.  If there exist points
\[
 b_i\in\conv X_i\qquad(i\in[r])
\]
such that
\[
 b_1+\cdots+b_r=0,
\]
then there are $x_i\in X_i$ which are not contained in a common facet of $P$.
Equivalently,
\[
 \relint\conv\{x_1,\ldots,x_r\}\subseteq\operatorname{int}P.
\]
\end{proposition}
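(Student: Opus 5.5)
We argue by contradiction, using the polytope-valued form of pushability from Proposition~\ref{prop:vertex-reduction}. Suppose that every choice $x_i\in X_i$, $i\in[r]$, lies in a common facet of $P$. Since $b_i\in\conv X_i$, Carath\'eodory's theorem lets us write each $b_i$ as a finite convex combination
\[
b_i=\sum_{x\in S_i}\lambda_i(x)\,x,\qquad S_i\subseteq X_i \text{ finite},\quad \lambda_i(x)>0,\quad \sum_{x\in S_i}\lambda_i(x)=1.
\]
For each $i$, define a random variable $X_i'$ that takes the value $x\in S_i$ with probability $\lambda_i(x)$, and take $X_1',\ldots,X_r'$ independent. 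These variables are finitely supported on $P$, since $X_i\subseteq P$. Moreover $\mathbb E X_i'=b_i$, so $\sum_i\mathbb E X_i'=\sum_i b_i=0$, and the system is collectively balanced.

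Every outcome $(X_1',\ldots,X_r')$ is a rainbow choice $x_i\in S_i\subseteq X_i$. By the contradiction hypothesis, every such outcome lies in a common facet. Hence the probability of lying in a common facet equals $1$. This gives
\[
\pi_r^{\mathrm{poly}}(P)\ge 1.
\]
By Proposition~\ref{prop:vertex-reduction}, however, $\pi_r^{\mathrm{poly}}(P)=\pi_r(P)<1$, a contradiction. Therefore some rainbow selection $x_i\in X_i$ is not contained in a common facet of $P$.

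The equivalent formulation $\relint\conv\{x_1,\ldots,x_r\}\subseteq\operatorname{int}P$ then follows directly from Lemma~\ref{lem:relint-criterion}, by the equivalence of its conditions (ii) and (iii).

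The only point needing care is the reduction to finitely supported distributions, since the sets $X_i$ may be infinite. Carath\'eodory's theorem handles this: it ensures the finite supports $S_i$ exist, so Proposition~\ref{prop:vertex-reduction} applies without any measure-theoretic issues. Beyond this, no step should be a real obstacle.
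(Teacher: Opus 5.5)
Your proof is correct and is essentially the paper's argument: the finite convex representations of the $b_i$ become independent, collectively balanced distributions, and Proposition~\ref{prop:vertex-reduction} bounds their common-facet probability by $\pi_r(P)<1$. Two small points: Carath\'eodory's theorem is not needed, since a point of $\conv X_i$ is a finite convex combination of points of $X_i$ by definition, and the contradiction framing can be dropped in favor of the direct probabilistic conclusion.
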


\begin{proof}
For each $i$, choose a finite convex representation of $b_i$ using points of $X_i$ and
regard its coefficients as a finitely supported probability distribution on $X_i$.
Choose the resulting random variables independently.  They are collectively balanced.
By Proposition~\ref{prop:vertex-reduction}, the probability that they lie in a common facet is at most
$\pi_r(P)<1$.  Hence some rainbow outcome $x_i\in X_i$ is not contained in a common facet.  The final
formulation follows from Lemma~\ref{lem:relint-criterion}.
\end{proof}

\begin{proof}[Proof of Theorem~\ref{thm:pushed-caratheodory}]
Apply Proposition~\ref{prop:pushed-selection} with $b_i=0$ for every $i$; these choices
are available because $0\in\conv X_i$.  Lemma~\ref{lem:relint-criterion} gives the
stated interior-penetration conclusion.
\end{proof}

\subsection{The pushed colorful polytopal KKMS theorem}

The following is the vertexwise form of the Sperner--Shapley step used by Frick and
Zerbib \cite[proof of Theorem~2.2]{FrickZerbib2019}.  In their statement all vertices
with the same face label use the same fixed point. Here the point may vary from vertex to
vertex.  Their face-preserving degree argument applies essentially unchanged, see Appendix~\ref{app:technical} for details.

\begin{lemma}\label{lem:face-preserving-selection}
Let $T$ be a triangulation of a polytope $P$, and assign to every vertex $u\in V(T)$ a
point $z(u)$ in $\supp(u)$. Then for every $p\in P$
there is a simplex $\tau\in T$ such that
\[
 p\in\conv\{z(u):u\in V(\tau)\}.
\]
\end{lemma}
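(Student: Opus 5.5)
Here $\supp(u)$ denotes the minimal face of $P$ containing $u$, and $T$ is a triangulation of $P$ itself. The plan is a face-preserving degree argument. Extend the vertex assignment $z$ affinely on each simplex of $T$ to a piecewise linear map $f\colon P\to P$, $f(x)=\sum_{u\in V(\tau)}\lambda_u z(u)$ for $x=\sum\lambda_u u$ in $\tau\in T$. This is well defined and continuous because the affine extensions agree on common faces of simplices. Moreover $f(\tau)=\conv\{z(u):u\in V(\tau)\}$ for each $\tau$. The statement is therefore equivalent to surjectivity of $f$: if $p=f(x)$ with $x\in\tau$, then $p\in\conv\{z(u):u\in V(\tau)\}$.

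First I check that $f$ is face preserving: $f(G)\subseteq G$ for every nonempty face $G$ of $P$. Since $T$ triangulates $P$, every simplex of $T$ contained in $G$ has all its vertices in $G$. For such a vertex $u$, minimality gives $\supp(u)\subseteq G$, so $z(u)\in G$. By convexity of $G$, the image of each simplex in $G$ lies in $G$, and $G$ is a union of such simplices.

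Next I show that any continuous face-preserving self-map of $P$ is surjective, by induction on faces. I claim that for every face $G$, the restriction $f|_G\colon G\to G$ has degree $1$ relative to $\partial G$, i.e.\ $f|_G\colon (G,\partial G)\to(G,\partial G)$ induces the identity on $H_{\dim G}(G,\partial G)\cong\mathbb Z$. Since $f$ maps each proper face of $G$ into itself, the pair map is well defined.

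The cleanest route avoids degree bookkeeping. The straight-line homotopy $h_t(x)=(1-t)x+tf(x)$ stays face preserving, since $x,f(x)\in\supp(x)$ and faces are convex. Now suppose $p\notin f(P)$. If $p\in\relint G$ for a face $G$, replace $P$ by $G$: the map $f|_G$ is face preserving on $G$. So assume $p\in\operatorname{int}P$ with $\dim P=d$. Every $h_t$ maps $\partial P$ into $\partial P$, so $p\notin h_t(\partial P)$ for all $t$. The local degree $\deg(h_t,\operatorname{int}P,p)$ is then defined and constant in $t$. At $t=0$ it equals $1$, because $h_0$ is the identity. Hence $\deg(f,\operatorname{int}P,p)=1\ne0$, so $p\in f(P)$, a contradiction. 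In the lower-dimensional case the same argument runs inside the affine hull of $G$, using that $f(\partial G)\subseteq\partial G$.

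The main obstacle is making the degree step rigorous with minimal machinery. An elementary alternative is a retraction contradiction: if $p\in\operatorname{int}P$ were missed, compose $f$ with radial projection from $p$ onto $\partial P$. This gives a map $P\to\partial P$ homotopic on $\partial P$ to the identity via $h_t$, contradicting the fact that $\partial P\simeq S^{d-1}$ is not a retract of the ball $P$ (Brouwer). Choosing either argument, and handling $p$ on the boundary by restricting to the face whose relative interior contains $p$, completes the proof.
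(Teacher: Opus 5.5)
Your proposal is correct and follows essentially the paper's argument. Both extend $z$ affinely to a face-preserving map, use the straight-line homotopy to see that it is homotopic to the identity on $\partial P$, and rule out a missed interior point via radial projection; your local-degree version is the same argument in different language. The only difference is minor: you handle $p\in\partial P$ by restricting to the face with $p$ in its relative interior, whereas the paper gets boundary points directly because $Z(P)$ is compact and contains $\operatorname{int}P$.
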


This is the same input as in the colorful KKMS polytopal theorem of Frick and Zerbib. The new ingredient below is what we do with it: Proposition~\ref{prop:pushed-selection} converts collective balance into a
rainbow choice that escapes every common facet.

\begin{proof}[Proof of Theorem~\ref{thm:pushed-kkms}]
We first assume the covers are closed.  Fix $\eps>0$ and triangulate $P$ so that every
simplex has diameter at most $\eps$.  For every vertex $u$ of the triangulation and every
color $i\in[r]$, the KKMS condition applied to $\supp(u)$ allows us to choose a nonempty face
\[
 \sigma_i(u)\subseteq\supp(u)
 \qquad\text{with}\qquad
 u\in A_{\sigma_i(u)}^i.
\tag{3.3}
\]
Use the prescribed representative $y_{\sigma_i(u)}^i\in\sigma_i(u)$ and define
\[
 z(u):=\frac1r\sum_{i=1}^r y_{\sigma_i(u)}^i.
\]
Since $\supp(u)$ is convex and every $y_{\sigma_i(u)}^i$ belongs to $\supp(u)$, we have
$z(u)\in\supp(u)$.  Apply Lemma~\ref{lem:face-preserving-selection} with $p=0$.  It gives a simplex
$\tau$ of the triangulation such that
\[
 0\in\conv\{z(u):u\in V(\tau)\}.
\]
Choose convex coefficients $\lambda_u$, $u\in V(\tau)$, witnessing this membership.
Then
\[
 0=\sum_u\lambda_u z(u)
   =\frac1r\sum_{i=1}^r\sum_u\lambda_u y_{\sigma_i(u)}^i.
\tag{3.4}
\]

For each color define
\[
 X_i^\tau:=\{y_{\sigma_i(u)}^i:u\in V(\tau)\}\subseteq P,
 \qquad
 b_i:=\sum_{u\in V(\tau)}\lambda_u y_{\sigma_i(u)}^i\in\conv X_i^\tau.
\]
Equation~(3.4) says that
\[
 b_1+\cdots+b_r=0.
\]
Applying Proposition~\ref{prop:pushed-selection} to $X_1^\tau,\ldots,X_r^\tau$, choose
vertices $u_i\in V(\tau)$ so that the prescribed points
$y_{\sigma_i(u_i)}^i$, $i\in[r]$, are not contained in a common facet of $P$.
By Lemma~\ref{lem:relint-criterion},
\[
 \relint\conv\{y_{\sigma_1(u_1)}^1,\ldots,y_{\sigma_r(u_r)}^r\}
 \subseteq\operatorname{int}P.
\]
On the other hand, $u_i\in A_{\sigma_i(u_i)}^i$ by (3.3), and all $u_i$ belong to the
same simplex $\tau$.  Since $\operatorname{diam}(\tau)\le\eps$, any one of the $u_i$
lies within $\eps$ of every selected set.  Thus the $\eps$-neighborhoods of
\[
 A_{\sigma_1(u_1)}^1,\ldots,A_{\sigma_r(u_r)}^r
\]
have a common point.

Let $\eps_n\to0$.  There are only finitely many $r$-tuples of faces of $P$, so after
passing to a subsequence the same tuple $\sigma_1,\ldots,\sigma_r$ occurs for every $n$.
The prescribed representatives are then fixed and still satisfy
\[
 \relint\conv\{y_{\sigma_1}^1,\ldots,y_{\sigma_r}^r\}
 \subseteq\operatorname{int}P.
\]
Choose $x_n\in P$ in the common $\eps_n$-neighborhood of the selected sets.  By
compactness, a subsequence converges to some $x\in P$.  Since
$\operatorname{dist}(x_n,A_{\sigma_i}^i)\le\eps_n$ and each $A_{\sigma_i}^i$ is closed,
$x\in A_{\sigma_i}^i$ for every $i$.  Hence
\[
 \bigcap_{i=1}^r A_{\sigma_i}^i\ne\varnothing.
\]
This proves the closed case.

For open KKMS covers one uses the standard shrinking argument: since the face set is
finite, the open sets can be replaced by closed subsets preserving all KKMS covering
conditions. See, for example, the proof of Komiya's theorem in
\cite{FrickZerbib2019}.  Applying the closed case to the shrunk cover gives the result.
\end{proof}

\section{Computing rainbow pushability and the main transversal theorem}\label{sec:computing}

\subsection{A general facet-depth bound}

For a facet $F$ of $P$, choose the supporting functional $\ell_F$ normalized by
\[
 F=P\cap\{\ell_F=1\},\qquad \ell_F\le1\text{ on }P,
\]
and put
\[
 a_F:=-\min_{x\in P}\ell_F(x)>0,
 \qquad
 \theta_F:=\frac{a_F}{1+a_F}.
\]
The number $\theta_F$ measures the depth of the origin relative to the
opposite support hyperplane and the facet $F$.

\begin{theorem}\label{thm:facet-depth}
For every centered polytope $P$ and every $r\ge1$,
\[
 \pi_r(P)\le
 \min\left\{1,
 \sum_{F\text{ facet of }P}\theta_F^r\right\}.
\]
\end{theorem}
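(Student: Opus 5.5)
Since a probability is at most $1$, the plan is to show $\pi_r(P)\le\sum_F\theta_F^r$ by a union bound over facets. Fix collectively balanced measures $\mu_1,\ldots,\mu_r$ on $V(P)$ with independent $V_i\sim\mu_i$. The event that $V_1,\ldots,V_r$ lie in a common facet is the union over facets $F$ of the events $\{V_i\in F \text{ for all } i\}$. By independence,
\[
 \Pr(\text{common facet})\le\sum_F\prod_{i=1}^r p_i(F),\qquad p_i(F):=\Pr(V_i\in F)=\Pr(\ell_F(V_i)=1).
\]
So it suffices to show $\prod_i p_i(F)\le\theta_F^r$ for every facet $F$.

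Fix $F$ and write $\ell=\ell_F$, $a=a_F$. On $P$ we have $-a\le\ell\le1$. For each $i$, split $\mathbb E\,\ell(V_i)$ according to whether $V_i\in F$:
\[
 m_i:=\mathbb E\,\ell(V_i)\ge p_i\cdot1+(1-p_i)(-a)=p_i(1+a)-a.
\]
Linearity of $\ell$ and collective balance give $\sum_i m_i=\ell\bigl(\sum_i\mathbb EV_i\bigr)=0$. Summing the bounds yields
\[
 (1+a)\sum_i p_i-ra\le0,\qquad\text{i.e.}\qquad \frac1r\sum_i p_i\le\frac{a}{1+a}=\theta_F.
\]
By AM--GM, $\prod_i p_i\le\bigl(\tfrac1r\sum_ip_i\bigr)^r\le\theta_F^r$, which is what we wanted. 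Summing over facets and taking the maximum over feasible measures then gives the theorem.

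The argument is short, and the only step needing care is the interplay between balance and the per-facet estimate. A single color need not have mean zero, so no individual $p_i(F)$ can be bounded by $\theta_F$. Only the average is controlled, and AM--GM is exactly the device that turns an average bound into a bound on the product. One should also confirm that $a_F>0$ holds because $0\in\operatorname{int}P$ and $\ell_F(0)=0$; this guarantees $\theta_F\in(0,1)$ and makes the normalization well defined. It is worth remarking that for the cube $C_d$ every facet has $a_F=1$ and $\theta_F=1/2$, and there are $2d$ facets, so the bound reads $2d/2^r=d/2^{r-1}$. This recovers the upper half of Theorem~\ref{thm:cube-push}, which is a useful sanity check.
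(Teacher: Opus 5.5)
Your proposal is correct and is essentially the paper's proof. Both bound $\mathbb E\,\ell_F(V_i)\ge(1+a_F)p_i-a_F$, sum using collective balance to get $\frac1r\sum_i p_i\le\theta_F$, and then finish with independence, AM--GM, and a union bound over facets.
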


\begin{proof}
Fix collectively balanced independent vertices $V_1,\ldots,V_r$ and a facet $F$.  Let
$q_i=\Pr(V_i\in F)$.  Since $\ell_F=1$ on $F$ and $\ell_F\ge-a_F$ on $P$,
\[
 \mathbb E\ell_F(V_i)
 \ge q_i-a_F(1-q_i)
 =(1+a_F)q_i-a_F.
\]
Collective balance gives $\sum_i\mathbb E\ell_F(V_i)=0$, and therefore
\[
 \frac1r\sum_{i=1}^r q_i\le\frac{a_F}{1+a_F}=\theta_F.
\]
By independence and AM--GM,
\[
 \Pr(V_1,\ldots,V_r\in F)=\prod_{i=1}^r q_i\le\theta_F^r.
\]
The event that the selected vertices lie in a common facet is the union of these facet
events, so the union bound over the facets of $P$ proves the claim.
\end{proof}

\begin{corollary}[Centrally symmetric polytopes]\label{cor:cs-push}
If $P=-P$, then
\[
 \pi_r(P)\le
 \min\left\{1,\frac{f_{d-1}(P)}{2^r}\right\},
\]
where $f_{d-1}(P)$ is the number of facets of $P$.
\end{corollary}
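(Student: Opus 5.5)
The plan is to apply Theorem~\ref{thm:facet-depth} directly and compute the facet depths $\theta_F$ for a centrally symmetric polytope. The only geometric input needed is that central symmetry forces $a_F=1$ for every facet $F$.

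First, fix a facet $F$ with normalized supporting functional $\ell_F$, so that $\ell_F\le 1$ on $P$ and $F=P\cap\{\ell_F=1\}$. Since $P=-P$, the map $x\mapsto -x$ is a bijection of $P$ onto itself. Therefore
\[
 \min_{x\in P}\ell_F(x)=\min_{x\in P}\ell_F(-x)=-\max_{x\in P}\ell_F(x)=-1,
\]
where the last equality holds because $\ell_F$ attains the value $1$ exactly on the nonempty facet $F$ and never exceeds $1$ on $P$. Hence $a_F=1$ and $\theta_F=\tfrac{a_F}{1+a_F}=\tfrac12$ for every facet $F$.

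Second, substitute these values into Theorem~\ref{thm:facet-depth}. It gives
\[
 \pi_r(P)\le\min\Bigl\{1,\sum_{F}\bigl(\tfrac12\bigr)^r\Bigr\}=\min\Bigl\{1,\frac{f_{d-1}(P)}{2^r}\Bigr\},
\]
since the sum has one term for each facet. A centrally symmetric polytope is automatically centered: for $x\in\operatorname{int}P$ we also have $-x\in\operatorname{int}P$, and by convexity $0$ is interior. So the hypotheses of Theorem~\ref{thm:facet-depth} are met.

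There is no real obstacle here. The only point requiring care is to confirm that the maximum of $\ell_F$ over $P$ equals exactly $1$, so that $a_F$ equals $1$ rather than merely being at most $1$. This holds because $F$ is nonempty and lies in $\{\ell_F=1\}$.
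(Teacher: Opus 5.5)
Your proposal is correct and is the paper's argument: central symmetry gives $\min_P\ell_F=-1$, so $a_F=1$ and $\theta_F=1/2$ for every facet, and Theorem~\ref{thm:facet-depth} then yields the bound. Your extra checks, that $\max_P\ell_F=1$ and that $P=-P$ is automatically centered (given full-dimensionality), are valid details the paper leaves implicit.
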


\begin{proof}
For every normalized facet functional $\ell_F$, central symmetry gives
$\min_P\ell_F=-1$.  Hence $a_F=1$ and $\theta_F=1/2$ for every facet, and
Theorem~\ref{thm:facet-depth} applies.
\end{proof}

\subsection{The cube and the main theorem}

For $C_d=[-1,1]^d$, a tuple of vertices lies in a common facet exactly when there is a coordinate in
which all signs agree.  The general facet-depth estimate gives the upper bound, we show the lower bound below.

\begin{proof}[Proof of Theorem~\ref{thm:cube-push}]
The cube is centrally symmetric and has $2d$ facets, so
Corollary~\ref{cor:cs-push} gives
\[
 \pi_r(C_d)\le \min\left\{1,\frac d{2^{r-1}}\right\}.
\tag{4.1}
\]
It remains to show sharpness.  There are $2^{r-1}$ antipodal classes in
$\{-1,1\}^r/\{\pm1\}$.  Suppose first that $d\le2^{r-1}$ and choose $d$ distinct
antipodal classes, represented by columns
\[
 c^{(1)},\ldots,c^{(d)}\in\{-1,1\}^r.
\]
Let $a_i\in\{-1,1\}^d$ be the $i$th row of the resulting $r\times d$ sign matrix, and
let $V_i$ be uniform on $\{a_i,-a_i\}$.  Every $V_i$ has mean zero.  Write
$V_i=\eps_i a_i$ with independent uniform signs $\eps_i$.  In coordinate $t$, all
selected signs agree if and only if
\[
 (\eps_1,\ldots,\eps_r)=\pm c^{(t)}.
\]
Because the chosen columns represent distinct antipodal classes, these events are
disjoint, each with probability $2^{1-r}$.  Thus the pushability probability is exactly
$d/2^{r-1}$.

If $d\ge2^{r-1}$, use every antipodal class among the first $2^{r-1}$ coordinates and
repeat arbitrary columns thereafter.  Every sign vector $(\eps_1,\ldots,\eps_r)$ is then
$\pm c^{(t)}$ for some coordinate $t$, so a fixed coordinate occurs with probability
one.  This proves equality in (4.1).
\end{proof}

We now return to the orthogonal star problem that motivated the parameter.

\begin{proof}[Proof of Theorem~\ref{thm:c-main}]
Let $r=\lfloor\log_2 d\rfloor+2$, so $2^{r-1}>d$ and
$\pi_r(C_d)<1$ by Theorem~\ref{thm:cube-push}.  Suppose, toward a contradiction, that
none of $\F_1,\ldots,\F_r$ has an orthogonal star transversal.  Section~\ref{sec:benchmark}
associates to each color an open vertex indexed KKM cover
$\{A_v^i:v\in V(C_d)\}$.  Regard it as a polytopal KKMS cover by assigning the empty set
to every nonvertex face.  Prescribe $y_v^i=v$ at every vertex and choose arbitrary
representatives on nonvertex faces (which are irrelevant here).  Theorem~\ref{thm:pushed-kkms} gives labels
$v_1,\ldots,v_r$ and a common point
\[
 x\in\bigcap_{i=1}^r A_{v_i}^i
\]
such that the $v_i$ are not contained in a common facet.  Choose witnesses
$F_i\in\F_i$ with $F_i\subset R_x^{v_i}$.  By hypothesis the rainbow choice
$F_1,\ldots,F_r$ has a common axis-parallel hyperplane transversal.  The pushing lemma (Lemma \ref{lem:pushing})
then places all $v_i$ in a common facet, a contradiction.
\end{proof}

\section{Separated $d$-intervals and lower constructions}\label{sec:dintervals}

\subsection{The multitrack interpretation}

By the transversal dictionary, Proposition~\ref{prop:dictionary}, the box formulation is
equivalent to the language of full separated $d$-intervals: a common axis-parallel
hyperplane is translated to strong intersection on one track, and an orthogonal star is a weak
transversal with one point on each track.  Thus Theorem~\ref{thm:c-main} immediately
becomes the following statement.

\begin{corollary}\label{cor:dintervals}
Let $ r=\left\lfloor\log_2d\right\rfloor+2$ and let $\F_1,\ldots,\F_r$ be finite families of full separated $d$-intervals.  Suppose
that for every rainbow choice $I_i\in\F_i$, we have that $I_1,\ldots,I_r$ are strongly intersecting.  Then for some $j$, $\F_j$ is weakly intersecting.
\end{corollary}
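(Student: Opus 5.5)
The plan is to reduce Corollary~\ref{cor:dintervals} to Theorem~\ref{thm:c-main} by realizing each full separated $d$-interval as an axis-parallel box in $\R^d$. The transversal dictionary, Proposition~\ref{prop:dictionary}, then translates hypothesis and conclusion between the two settings.

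\textbf{Step 1: the box model.} To a full separated $d$-interval $I=(J_1,\ldots,J_d)$, with $J_t$ a nonempty compact interval on track $t$, associate the box
\[
B(I):=J_1\times\cdots\times J_d\subset\R^d.
\]
This box is nonempty, compact and connected, and its $t$-th coordinate projection is exactly $J_t$. Define $\mathcal G_i:=\{B(I):I\in\F_i\}$, keeping a copy for each member so that multiplicities are irrelevant. This gives finite families of compact connected sets.

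\textbf{Step 2: translating the hypothesis.} Take a rainbow choice $B(I_1),\ldots,B(I_r)$. By assumption the $d$-intervals $I_1,\ldots,I_r$ are strongly intersecting, so some track $t$ carries a point lying in every $I_i$. That means $\bigcap_i J^{(i)}_t\neq\varnothing$. By the equivalence (a)$\Leftrightarrow$(c) of the first part of Proposition~\ref{prop:dictionary}, the boxes then have a common axis-parallel hyperplane transversal $H_t(c)$.

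\textbf{Step 3: applying the theorem and translating back.} If some $\F_i$ is empty, it is vacuously weakly intersecting, so we may assume all families are nonempty. Theorem~\ref{thm:c-main} with $r=\lfloor\log_2 d\rfloor+2$ then gives an index $j$ for which $\mathcal G_j$ has an orthogonal star transversal $S_x$. By (a)$\Rightarrow$(c) of the second part of Proposition~\ref{prop:dictionary}, every $I\in\F_j$ has $x_t\in J_t$ for some $t$. So the $d$ points $x_1,\ldots,x_d$, with $x_t$ placed on track $t$, form a weak transversal of $\F_j$.

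No real obstacle is expected. The only points needing care are the bookkeeping of the track-to-coordinate identification and the nonemptiness and compactness hypotheses of Theorem~\ref{thm:c-main}. Fullness of the $d$-intervals is used exactly once: it makes each $B(I)$ nonempty, so the dictionary applies.
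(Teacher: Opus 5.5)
Your proposal is correct and follows the paper's own route: the paper derives Corollary~\ref{cor:dintervals} by identifying full separated $d$-intervals with coordinate boxes through Proposition~\ref{prop:dictionary} and then invoking Theorem~\ref{thm:c-main}. Your treatment of empty color classes and of fullness (which guarantees that each box is nonempty) only spells out details the paper leaves implicit.
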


\subsection{A pairwise obstruction with $2d$ members}\label{sec:lower}

The following explicit decomposition is useful for seeing why pairwise transversality is
not enough.

\begin{lemma}\label{lem:ariadna-decomp}
For every $d\ge3$, the edges of $K_{2d}$ can be partitioned into interval graphs
$G_1,\ldots,G_d$ on the common vertex set
\[
 X\sqcup Y,
 \qquad
 X=\{v_1,\ldots,v_d\},
 \quad
 Y=\{v_{d+1},\ldots,v_{2d}\},
\]
such that
\[
 G_1=K_X\sqcup K_Y
\]
and every edge of $G_i$, $i\ge2$, joins $X$ to $Y$.  Moreover, if
$Q_i$ is a clique of $G_i$ for each $i\in[d]$, then
\[
 Q_1\cup\cdots\cup Q_d\ne V(K_{2d}).
\]
In other words, the vertices cannot be covered by choosing one clique from each of the
$d$ interval graphs.
\end{lemma}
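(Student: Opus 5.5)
The plan is to first show that the covering claim is automatic for any decomposition of the required shape, and then to construct such a decomposition explicitly; the construction is the only real work.

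\emph{Covering obstruction.} Assume $G_1=K_X\sqcup K_Y$ and every edge of $G_i$ ($i\ge2$) joins $X$ to $Y$. Then each $G_i$ with $i\ge2$ is bipartite, hence triangle-free. So every clique $Q_i$, $i\ge2$, is empty, a single vertex, or one $X$--$Y$ edge, and in each case it contains at most one vertex of $Y$ and at most one of $X$. A clique $Q_1$ of $G_1$ lies inside $X$ or inside $Y$; by symmetry suppose $Q_1\subseteq X$. Then $Q_2\cup\cdots\cup Q_d$ meets $Y$ in at most $d-1<d=|Y|$ vertices, so the union misses a vertex of $Y$.

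\emph{Decomposition.} It remains to split the edges of $K_{X,Y}\cong K_{d,d}$ into $d-1$ spanning subgraphs that are interval graphs. I would make each of them a disjoint union of caterpillars (together with isolated vertices), since such forests are interval graphs. For $x_k=v_k$ and $y_k=v_{d+k}$, the steps are as follows.

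For $k=1,\ldots,d-3$, let $G_{k+1}$ be the double star centered at the edge $x_ky_k$. It joins $x_k$ to all $y_j$ with $j\ge k$, and $y_k$ to all $x_j$ with $j>k$. This is a caterpillar with spine $x_ky_k$, and these graphs exhaust all edges incident to $x_1,\ldots,x_{d-3},y_1,\ldots,y_{d-3}$. What remains is a copy of $K_{3,3}$ on $\{x_{d-2},x_{d-1},x_d\}\cup\{y_{d-2},y_{d-1},y_d\}$. Relabel these vertices $a_1,a_2,a_3$ and $b_1,b_2,b_3$. Let $G_{d-1}$ be the path $a_1b_1a_2b_2a_3b_3$. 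Let $G_d$ consist of the leftover edges, which form the path $b_2a_1b_3a_2$ together with the disjoint edge $a_3b_1$. A direct check shows these two graphs partition the nine edges of $K_{3,3}$, so all edges of $K_{2d}$ are used exactly once. Adding isolated vertices keeps every $G_i$ an interval graph on the full vertex set $X\sqcup Y$. The count is $1+(d-3)+2=d$ graphs in total, which uses $d\ge3$.

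\emph{Main obstacle.} The delicate point is hitting exactly $d-1$ bipartite graphs. Peeling off double stars all the way down gives $d$ of them, one too many. Trying to absorb the last edge into an earlier double star creates an induced $4$-cycle, which is not an interval graph. This is why I stop the peeling at $K_{3,3}$ and decompose that by hand into two linear forests. An edge count confirms the budget is tight: a caterpillar forest on $2d$ vertices has at most $2d-1$ edges, and $(d-1)(2d-1)\ge d^2$ holds precisely when $d\ge3$.
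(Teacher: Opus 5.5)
Your proof is correct, and your covering obstruction is the same argument as the paper's. The difference is in how you decompose the cross edges $K_{X,Y}$.

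Write $x_a=v_a$ and $y_b=v_{d+b}$. The paper takes each $G_i$, $2\le i\le d-1$, to be an explicit zigzag path made of the edges $x_ay_b$ whose index sum lies in a two-element set $B_i$. These are two consecutive anti-diagonals of the $d\times d$ grid of cross edges. The three remaining short anti-diagonals are collected into $G_d$, which is a path plus one edge. Disjointness is read off from the disjoint sets $B_i$, and exhaustiveness comes from an edge count.

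You instead cut the grid into hooks $\min(a,b)=k$, each of which is a double star. The partition then follows from the single rule $x_ay_b\mapsto G_{\min(a,b)+1}$, and only a $3\times 3$ base case needs checking by hand. State that rule explicitly: it is what certifies that your double stars are pairwise edge-disjoint and avoid the final $K_{3,3}$.

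Incidentally, the two decompositions have the same edge counts: $2d-2i+3$ for $2\le i\le d-1$, and $4$ for $G_d$.

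The paper's choice gives a slightly stronger structure: every $G_i$ with $i\ge 2$ is a linear forest. The price is index bookkeeping. Your route has a more transparent verification and shows directly why $d\ge 3$ is needed. Its graphs have large maximum degree, but that is harmless, because Proposition~\ref{prop:pairwise-obstruction} only uses that each $G_i$ is an interval graph.
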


We give the decomposition explicitly.  The graphs $G_2,\ldots,G_{d-1}$ will be paths,
while $G_d$ is a small linear forest; thus all of them are interval graphs, as is
$G_1=K_X\sqcup K_Y$.  We then verify separately that their edge sets are disjoint and
exhaust $E(K_{2d})$, and finally prove the clique-cover obstruction.

\begin{proof}
Write
$V(K_{2d})=\{v_1,\ldots,v_{2d}\}$ and put
\[
 X=\{v_1,\ldots,v_d\},
 \qquad
 Y=\{v_{d+1},\ldots,v_{2d}\}.
\]
Let
\[
 G_1=K_X\sqcup K_Y.
\]
For $2\le i\le d-1$, define a vertex sequence
$u_1,\ldots,u_{2d-2i+4}$ by
\[
 u_j=
 \begin{cases}
 v_{i+(j-3)/2},& i\text{ even and }j\text{ odd},\\
 v_{2d-(j-2)/2},& i\text{ even and }j\text{ even},\\
 v_{2d-i+(5-j)/2},& i\text{ odd and }j\text{ odd},\\
 v_{j/2},& i\text{ odd and }j\text{ even}.
 \end{cases}
\]
and let $G_i$ be the path with this vertex sequence.  Finally, set
\[
 E(G_d)=
 \begin{cases}
 \{v_{d-1}v_{2d},\ v_{2d}v_d,\ v_dv_{2d-1},\ v_1v_{d+1}\},&d\text{ even},\\
 \{v_{d+2}v_1,\ v_1v_{d+1},\ v_{d+1}v_2,\ v_{2d}v_d\},&d\text{ odd}.
 \end{cases}
\tag{5.1}
\]
Thus $G_d$ is the disjoint union of a path and one edge (see Figure \ref{fig:examplec6} for an example for $d=6$).  Paths, linear forests,
and disjoint unions of cliques are interval graphs; see, for example,
\cite{Lekkerkerker}.

\begin{figure}
    \centering
    \includegraphics[width=0.9\linewidth]{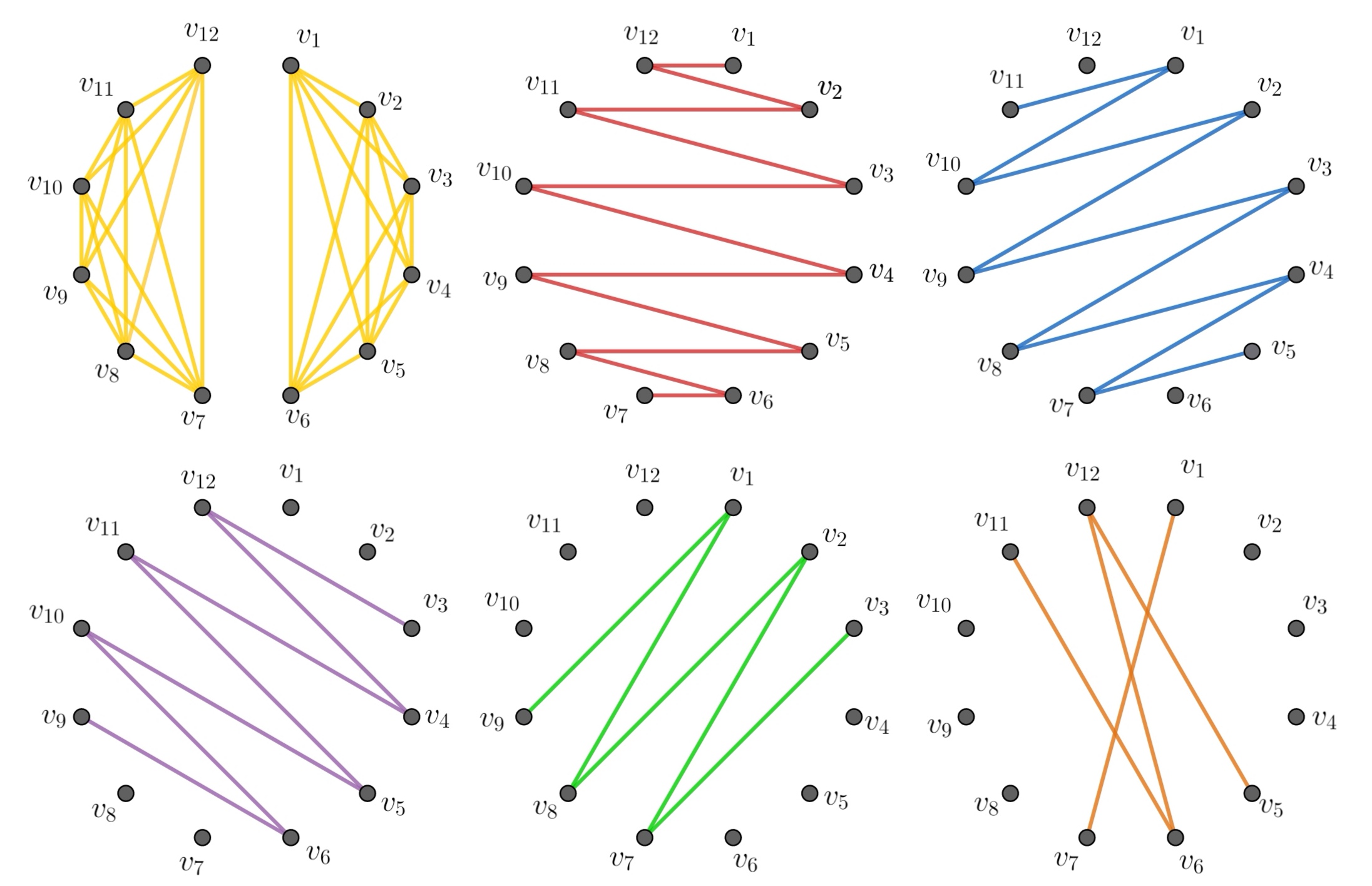}
    \caption{Example that shows $c_6\geq 3$}
    \label{fig:examplec6}
\end{figure}

Every edge of $G_i$, $i\ge2$, joins $X$ to $Y$.  For the disjointness verification,
observe that for $2\le i\le d-1$ the sum of the indices of the endpoints of every edge of
$G_i$ belongs to
\[
 B_i=
 \begin{cases}
 \{2d+i-1,2d+i\},&i\text{ even},\\
 \{2d-i+2,2d-i+3\},&i\text{ odd}.
 \end{cases}
\]
For the final graph take
\[
 B_d=
 \begin{cases}
 \{d+2,3d-1,3d\},&d\text{ even},\\
 \{d+2,d+3,3d\},&d\text{ odd}.
 \end{cases}
\]
These sets are pairwise disjoint, and the edges in (5.1) are also disjoint from the
preceding paths.  Since $G_1$ contains exactly the edges internal to $X$ or internal to $Y$,
it is disjoint from every $G_i$ with $i\ge2$.

It remains only to count:
\[
\begin{aligned}
 |E(G_1)|&=d(d-1),\\
 |E(G_i)|&=2d-2i+3\quad(2\le i\le d-1),\\
 |E(G_d)|&=4.
\end{aligned}
\]
Hence
\[
 \sum_{i=1}^d|E(G_i)|=\binom{2d}{2},
\]
so the $G_i$ partition $E(K_{2d})$.

Finally, every clique of
$G_1=K_X\sqcup K_Y$ is contained entirely in $X$ or entirely in $Y$; suppose, by
symmetry, that $Q_1\subseteq X$.  For every $i\ge2$ there is no edge of $G_i$ inside
$Y$, so a clique $Q_i$ contains at most one vertex of $Y$.  Thus
$Q_2\cup\cdots\cup Q_d$ covers at most $d-1$ of the $d$ vertices of $Y$, and at
least one vertex of $Y$ is missed by $Q_1\cup\cdots\cup Q_d$.  This proves the
lemma.
\end{proof}

By Proposition~\ref{prop:dictionary} we conclude the following.

\begin{proposition}\label{prop:pairwise-obstruction}
For every $d\ge3$ there is a family
\[
 \F=\{F_1,\ldots,F_{2d}\}
\]
of axis-parallel boxes in $\R^d$ such that every two members have a common axis-parallel
hyperplane transversal, but no orthogonal star is transversal to all of $\F$.
\end{proposition}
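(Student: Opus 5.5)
The plan is to realize the interval graphs $G_1,\ldots,G_d$ of Lemma~\ref{lem:ariadna-decomp} as the coordinate interval graphs of a family of boxes, and then read off both conclusions from the transversal dictionary, Proposition~\ref{prop:dictionary}.

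First, for each $i\in[d]$ we fix an interval representation of $G_i$ on the vertex set $\{v_1,\ldots,v_{2d}\}$. Such a representation exists because $G_i$ is an interval graph. We may take all intervals compact and nonempty: every vertex receives an interval, and an isolated vertex gets a small interval placed far from the others. Call the interval of $v_j$ in the $i$-th representation $I_i^j$. We then set
\[
 F_j:=I_1^j\times\cdots\times I_d^j .
\]
Each $F_j$ is a nonempty compact axis-parallel box, hence connected. Its coordinate projections are exactly the intervals $I_i^j$, so the interval graph built in Proposition~\ref{prop:dictionary} is exactly $G_i$.

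Next, the pairwise property. Take $j\neq j'$. Since the $G_i$ partition $E(K_{2d})$, the edge $v_jv_{j'}$ lies in some $G_i$. Then $\{j,j'\}$ is a clique of $G_i$, and condition (d) of the first part of Proposition~\ref{prop:dictionary} shows that $F_j,F_{j'}$ have a common axis-parallel hyperplane transversal.

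Finally, the global property. Suppose some orthogonal star were transversal to $\mathcal F$. By the second part of Proposition~\ref{prop:dictionary} there would be possibly empty cliques $Q_i$ of $G_i$ with $Q_1\cup\cdots\cup Q_d$ equal to all $2d$ vertices. This contradicts the last assertion of Lemma~\ref{lem:ariadna-decomp}.

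The only point that needs care is the first step. Each interval representation must use exactly the vertex set $X\sqcup Y$, including vertices isolated in $G_i$, and must introduce no extra adjacencies. Giving every isolated vertex its own interval far from all others secures this, and then the rest is a direct application of the dictionary.
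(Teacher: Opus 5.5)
Your proposal is correct and matches the paper's argument: the paper deduces the proposition directly from Lemma~\ref{lem:ariadna-decomp} via the transversal dictionary (Proposition~\ref{prop:dictionary}). You make explicit the one step the paper leaves implicit, namely realizing each $G_i$ by compact intervals on the full vertex set, isolated vertices included, and taking the product boxes $F_j=I_1^j\times\cdots\times I_d^j$.
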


Taking all color classes equal to $\F$ gives
\[
 h_d\ge3,
 \qquad
 c_d\ge3
 \qquad(d\ge3).
\]

Combining the lower bound with Theorems~\ref{thm:h-benchmark} and \ref{thm:c-main} gives
\[
 h_3=h_4=h_5=h_6=3,
 \qquad
 c_3=3.
\]

For completeness we also record the planar counterexample for two colors.

\begin{proposition}\label{prop:two-colors-plane}
There are two families $\F_1,\F_2$ of axis-parallel rectangles in $\R^2$ such that every
cross pair $F_1\in\F_1$, $F_2\in\F_2$ has a common axis-parallel line transversal, but
neither color class has an orthogonal star transversal.  Hence $c_2=3$.
\end{proposition}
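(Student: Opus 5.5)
The plan is to build the example explicitly and then combine it with Theorem~\ref{thm:c-main}. For $d=2$ that theorem gives $c_2\le\lfloor\log_2 2\rfloor+2=3$. A pair of families as in the statement shows that $(2,2)$ is not a valid pair. The pair $(1,1)$ also fails, since a single family with no orthogonal star transversal satisfies the rainbow hypothesis vacuously. Hence $c_2=3$. Throughout I would work in the combinatorial language of Proposition~\ref{prop:dictionary}: a rectangle is determined by its two coordinate intervals. So "some line meets both $F_1$ and $F_2$" becomes "their $x$-intervals overlap or their $y$-intervals overlap", and "a color class has an orthogonal star transversal" becomes "its members can be covered by one $x$-clique together with one $y$-clique".

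\emph{Step 1: choose $\F_1$.} Take $\F_1$ to be (tiny squares around) points with pairwise distinct $x$- and $y$-coordinates. A vertical line then meets at most one point and a horizontal line at most one, so any three such points already have no star transversal. For a first attempt I would take the diagonal points $p_k=(k,k)$, $0\le k\le n$, or the antidiagonal points $(k,n-k)$.

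\emph{Step 2: describe the admissible $\F_2$.} Each $F\in\F_2$ must, for every $p_k$, contain $p_k$'s $x$-coordinate in its $x$-interval or $p_k$'s $y$-coordinate in its $y$-interval. Since projections are intervals, the indices covered by the $x$-projection form an interval of indices, and likewise for $y$. The admissible rectangles are therefore parametrised by a split of $\{0,\dots,n\}$ into an $x$-part and a $y$-part, and in the minimal case these are two complementary intervals of indices. This reduces the cross-pair condition to a finite, explicit list of rectangle types.

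\emph{Step 3: choose members of $\F_2$ that kill all stars.} For each star centre $(a,b)$ I want some member of $\F_2$ that misses both lines through it. The first attempts fail. With diagonal points, every admissible rectangle contains index $0$ in one of its projections, so the star centred at the corner $(0,0)$ meets all of them. With three antidiagonal points and the four minimal "L-split" rectangles, the centre $(0,2)$ still works.

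This is the main obstacle. The cross condition pushes every $F\in\F_2$ to have long projections, and long projections make stars easy to find. To get around it, I would let $\F_1$ consist of genuine rectangles rather than points, and make the configuration symmetric under swapping the roles of the two colors. For example, I would take $\F_1$ to be four thin bars near the four sides of a square arranged "pinwheel" style, and $\F_2$ its rotation by $90^\circ$ or a reflection across the diagonal. Cross pairs then meet in a projection by design. The star check reduces to a finite case analysis on where $a$ and $b$ fall relative to the (few) interval endpoints. If needed, I would find suitable interval data by a small search over the interval orders on each axis, using the dictionary.

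Finally, I would verify the two required properties of the chosen configuration and conclude $c_2=3$ from Theorem~\ref{thm:c-main} as described at the start.
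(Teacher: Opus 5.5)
\textbf{There is a genuine gap: the example is never constructed.} The proposition is an existence statement, so its whole content is an explicit configuration and its verification. Your proposal does not supply one. Steps 1--3 record attempts that fail, and the last step defers the construction to ``a small search''. Your concluding deduction of $c_2=3$ from Theorem~\ref{thm:c-main} is fine, but it rests on this missing example.

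\textbf{Why Steps 1--3 cannot work.} The failures are structural. If $p\in\F_1$ is a point, then a rectangle $F$ shares an axis-parallel line with $p$ exactly when the star $S_p$ meets $F$. So $S_p$ is automatically a star transversal of every admissible $\F_2$.

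\textbf{Why the pinwheel does not work either.} Take four thin bars along the sides of a square with quarter-turn symmetry. There are two cases.
\begin{itemize}
\item If opposite bars have overlapping long projections, then $\F_1$ already has a star transversal: a vertical line through the common $x$-range of the top and bottom bars, together with a horizontal line through the common $y$-range of the two side bars.
\item If they do not overlap, the $90^\circ$ rotation about the center returns $\F_1$ itself. Then the bottom bar of $\F_1$ and the top bar of $\F_2$ form a cross pair with no common line.
\end{itemize}
The mirror image (all reflections give the same one) fails too. Place the bottom bar at $[u,u+\ell]\times[0,\eps]$ in $[0,s]^2$, with its quarter-turn images as the other bars. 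This bar shares a line with the two mirrored side bars only if $u\le\eps$ and $u+\ell\ge s-\eps$. That again forces opposite bars of $\F_1$ to overlap, so we are back in the first case. Hence ``cross pairs meet in a projection by design'' is false for this design.

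\textbf{The missing idea.} Apply your Step~1 observation to \emph{both} colors at once, using fat rectangles whose projections interleave with those of the other color. Three rectangles with pairwise disjoint $x$-projections and pairwise disjoint $y$-projections can never be met by one vertical and one horizontal line. The paper takes
$\F_1=\{[5,11]\times[21,27],\,[13,19]\times[13,19],\,[21,27]\times[5,11]\}$ and
$\F_2=\{[1,7]\times[9,15],\,[9,15]\times[1,7],\,[17,23]\times[17,23]\}$.
Each class has the disjoint-projections property. On each axis the six projections alternate in color, consecutive ones overlap, and non-consecutive ones are disjoint. So the cross pairs overlapping on a given axis form a Hamiltonian path of $K_{3,3}$, and the $x$-path and $y$-path together cover all nine cross pairs. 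With this example in hand, your concluding argument for $c_2=3$ goes through.
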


\begin{proof}
Take
\[
\begin{aligned}
\F_1=\{&[5,11]\times[21,27],\ [13,19]\times[13,19],\ [21,27]\times[5,11]\},\\
\F_2=\{&[1,7]\times[9,15],\ [9,15]\times[1,7],\ [17,23]\times[17,23]\}.
\end{aligned}
\]

\begin{figure}[tbp]
\centering
\begin{tikzpicture}[x=0.28cm,y=0.28cm]
  % Family F_1
  \filldraw[draw=blue!70!black,fill=blue!50,fill opacity=0.35] (5,21) rectangle (11,27);
  \filldraw[draw=blue!70!black,fill=blue!50,fill opacity=0.35] (13,13) rectangle (19,19);
  \filldraw[draw=blue!70!black,fill=blue!50,fill opacity=0.35] (21,5) rectangle (27,11);

  % Family F_2
  \filldraw[draw=red!70!black,fill=red!50,fill opacity=0.35] (1,9) rectangle (7,15);
  \filldraw[draw=red!70!black,fill=red!50,fill opacity=0.35] (9,1) rectangle (15,7);
  \filldraw[draw=red!70!black,fill=red!50,fill opacity=0.35] (17,17) rectangle (23,23);

  % Labels
  \node at (8,24) {$1,1$};
  \node at (16,16) {$1,2$};
  \node at (24,8) {$1,3$};

  \node at (4,12) {$2,1$};
  \node at (12,4) {$2,2$};
  \node at (20,20) {$2,3$};
\end{tikzpicture}
\caption{Counterexample for two colors on the plane.}
\label{fig:countplane}
\end{figure}

The example is depicted in Figure~\ref{fig:countplane}. A direct check shows that every bicolored  pair overlaps in at least one coordinate projection,
so it has a common vertical or horizontal transversal.  Within either color class, the
three $x$-projections are pairwise disjoint and the three $y$-projections are pairwise
disjoint.  Hence one vertical and one horizontal line can meet at most two of the three
rectangles.
\end{proof}

\section{Actual colorful Kaiser--Rabinovich}\label{sec:actual-KR}

The main development of this paper concerns the case in which each component is an
interval.  This is where the cubical geometry is visible and where rainbow pushability can
be computed exactly.  We now return to the general setting in Kaiser and
Rabinovich's original paper \cite{KaiserRabinovich1999}.

\begin{definition}
An $(n,D)$-body is an ordered $n$-tuple
\[
 K=(K_1,\ldots,K_n)
\]
of convex subsets of $\R^D$; empty components are allowed.  A family $\mathcal G$ of
$(n,D)$-bodies is \emph{strongly intersecting} if for some $i\in[n]$,
\[
 \bigcap_{K\in\mathcal G}K_i\ne\varnothing,
\]
and is \emph{weakly intersecting} if there are points $p_1,\ldots,p_n\in\R^D$ such that
every $K\in\mathcal G$ contains $p_i$ in at least one component $K_i$.
\end{definition}

For integers $m,k\ge1$, write
\[
 \mathsf C(n,D;m,k)
\]
for the following assertion: whenever $\F_1,\ldots,\F_m$ are finite nonempty color
classes of convex $(n,D)$-bodies and every rainbow subfamily of size at most $k$ is
strongly intersecting, some color class is weakly intersecting.  When $k=m$, the
hypothesis is equivalent to requiring only complete rainbow choices, since strong
intersection is inherited by subfamilies.

Kaiser and Rabinovich denote by $H(n,D)$ the smallest integer $h$ such that every finite
family of convex $(n,D)$-bodies whose every $h$-member subfamily is strongly intersecting
is weakly intersecting.  They prove the submultiplicativity
\begin{equation}
 H(n,D_1+D_2)\le H(n,D_1)H(n,D_2).
\tag{7.1}\label{eq:KR-submult}
\end{equation}
Their proof is a projection/fiber argument, and the same organization is compatible with
colors.

\begin{proposition}\label{prop:colorful-product}
If
\[
 \mathsf C(n,D_1;m_1,k_1)
 \qquad\text{and}\qquad
 \mathsf C(n,D_2;m_2,k_2)
\]
hold, then
\[
 \mathsf C(n,D_1+D_2;m_1m_2,k_1k_2)
\]
holds.
\end{proposition}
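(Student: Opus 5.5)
We argue by contradiction, following the projection/fiber organization of Kaiser and Rabinovich with the color classes indexed by pairs. Write $\R^{D_1+D_2}=\R^{D_1}\times\R^{D_2}$ and let $\pi$ denote the projection onto the second factor. The $m_1m_2$ color classes are $\F_{a,b}$ with $a\in[m_1]$ and $b\in[m_2]$. We assume every rainbow subfamily of size at most $k_1k_2$ is strongly intersecting, and, toward a contradiction, that no $\F_{a,b}$ is weakly intersecting.

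\emph{Fibers.} For a body $K$ and points $q=(q_1,\ldots,q_n)\in(\R^{D_2})^n$, let $K^q$ be the $(n,D_1)$-body with components
\[
K^q_i=\{x\in\R^{D_1}:(x,q_i)\in K_i\}.
\]
These components are convex, possibly empty. If, for some $q$, the fiber family $\F_{a,b}^q=\{K^q:K\in\F_{a,b}\}$ is weakly intersecting via $p_1,\ldots,p_n$, then $\F_{a,b}$ is weakly intersecting via the points $(p_i,q_i)$. Hence no fiber family $\F_{a,b}^q$ is weakly intersecting.

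\emph{Projected bodies.} For a finite family $G$ of $(n,D_1+D_2)$-bodies, define the $(n,D_2)$-body $G^*$ by
\[
G^*_i=\pi\Bigl(\bigcap_{K\in G}K_i\Bigr).
\]
Each $G^*_i$ is convex, being a linear image of a convex set. The key equivalence is this: $q_i\in G^*_i$ if and only if the fibers $K^q_i$, $K\in G$, have a common point.

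Fix $b\in[m_2]$. Let $\mathcal H_b$ be the finite, nonempty family of all bodies $G^*$, where $G$ ranges over the rainbow subfamilies of $\F_{1,b},\ldots,\F_{m_1,b}$ of size at most $k_1$.

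\emph{Claim: no $\mathcal H_b$ is weakly intersecting.} Suppose $\mathcal H_b$ is weakly intersecting via $q$. Then every $G^*\in\mathcal H_b$ has a component with $q_i\in G^*_i$. By the key equivalence, every rainbow subfamily of size at most $k_1$ of the fiber families $\F_{1,b}^q,\ldots,\F_{m_1,b}^q$ is strongly intersecting. Applying $\mathsf C(n,D_1;m_1,k_1)$, some $\F_{a,b}^q$ is weakly intersecting, which contradicts the previous paragraph.

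\emph{Conclusion.} Apply $\mathsf C(n,D_2;m_2,k_2)$ contrapositively to $\mathcal H_1,\ldots,\mathcal H_{m_2}$. This yields distinct $b_1,\ldots,b_t$ with $t\le k_2$, and members $G_{b_j}^*\in\mathcal H_{b_j}$, that are not strongly intersecting. Put $G=\bigcup_jG_{b_j}$. It has at most $k_1k_2$ members. It is rainbow in the pair-indexed colors, since the $b_j$ are distinct and within each $G_{b_j}$ the first indices $a$ are distinct. By hypothesis, $G$ is strongly intersecting: there is an $i$ and a point $(x,y)\in\bigcap_{K\in G}K_i$. Then $y\in(G_{b_j}^*)_i$ for every $j$, so the $G_{b_j}^*$ are strongly intersecting in component $i$, a contradiction.

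The step needing the most care is the design of the intermediate families $\mathcal H_b$. They must be finite and nonempty, and their components must be convex, so that the $D_2$-assertion applies. The color bookkeeping must also guarantee that the union $G$ is genuinely rainbow and of size at most $k_1k_2$. Both points check out: the colors are indexed by pairs, and the first coordinate is handled inside each $\mathcal H_b$ while the second is handled by the $D_2$-step.
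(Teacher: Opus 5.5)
Your proof is correct and is essentially the paper's argument, with the roles of the two factors interchanged and the argument run by contradiction. The paper groups the colors into $m_1$ blocks of $m_2$ and projects intersections of rainbow block-subfamilies of size at most $k_2$ onto $\R^{D_1}$; it applies $\mathsf C(n,D_1;m_1,k_1)$ to these meta-colors and then $\mathsf C(n,D_2;m_2,k_2)$ to the fibers, whereas you project onto $\R^{D_2}$, apply $\mathsf C(n,D_1;m_1,k_1)$ to the fibers and $\mathsf C(n,D_2;m_2,k_2)$ contrapositively to the meta-colors.
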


\begin{proof}
Identify every component space as
\[
 \R^{D_1+D_2}=\R^{D_1}\times\R^{D_2}.
\]
The argument has two stages.  We first project intersections of small rainbow families to
the first factor and use the $D_1$-dimensional statement to choose one good block of
colors together with one fiber in each component.  We then restrict that block to those
fibers and apply the $D_2$-dimensional statement.

Partition the $m_1m_2$ original color classes into $m_1$ blocks
$B_1,\ldots,B_{m_1}$ of $m_2$ colors each.  Let $\operatorname{pr}$ denote projection
onto $\R^{D_1}$.  For a block $B_a$ and a nonempty rainbow subfamily $\mathcal G$ using
colors from $B_a$, with $|\mathcal G|\le k_2$, form the convex $(n,D_1)$-body
\[
 Q_{\mathcal G}:=
 \left(
 \operatorname{pr}\!\left(\bigcap_{K\in\mathcal G}K_1\right),\ldots,
 \operatorname{pr}\!\left(\bigcap_{K\in\mathcal G}K_n\right)
 \right).
\]
For each $a$, regard all such $Q_{\mathcal G}$ as one meta-color class.

Take at most $k_1$ meta-bodies from distinct meta-colors, say
$Q_{\mathcal G_1},\ldots,Q_{\mathcal G_s}$.  Since the blocks are disjoint,
$\mathcal G_1\cup\cdots\cup\mathcal G_s$ is a rainbow original subfamily of size at most
$k_1k_2$, hence is strongly intersecting.  Projecting a common point in the witnessing
component shows that the selected meta-bodies are strongly intersecting.  Thus
$\mathsf C(n,D_1;m_1,k_1)$ yields a block $B_*$ whose entire meta-color class is weakly
intersecting.  Choose points $p_i\in\R^{D_1}$ witnessing this.

For an original body $K$ whose color lies in $B_*$, define a convex $(n,D_2)$-body by
restricting to the corresponding fibers:
\[
 K^{p}:=\bigl(K_1^{p_1},\ldots,K_n^{p_n}\bigr),
 \qquad
 K_i^{p_i}:=\{y\in\R^{D_2}:(p_i,y)\in K_i\}.
\]
For every rainbow $\mathcal G\subseteq B_*$ of size at most $k_2$, weak intersection of
the meta-color implies that for some $i$,
\[
 p_i\in\operatorname{pr}\!\left(\bigcap_{K\in\mathcal G}K_i\right).
\]
Hence there is a common $y\in\R^{D_2}$ with $(p_i,y)\in K_i$ for every
$K\in\mathcal G$, so the restricted family $\{K^p:K\in\mathcal G\}$ is strongly
intersecting.  Applying $\mathsf C(n,D_2;m_2,k_2)$ inside $B_*$ gives an original color
class, call it $\F_*$, together with points $q_1,\ldots,q_n\in\R^{D_2}$ witnessing weak
intersection of the restricted bodies.  Thus for every $K\in\F_*$ there is an index
$i\in[n]$ such that $q_i\in K_i^{p_i}$, equivalently $(p_i,q_i)\in K_i$.  Hence
\[
 (p_1,q_1),\ldots,(p_n,q_n)\in\R^{D_1+D_2}
\]
witness that $\F_*$ is weakly intersecting in the original space.
\end{proof}

To use Proposition~\ref{prop:colorful-product}, we need the one-dimensional statements
for general $n$-intervals, where some components may be empty.  The proofs already
developed for full separated intervals extend with only a harmless change in the cubical
cover.

\begin{lemma}\label{lem:KR-1d-inputs}
With $L_n$ and $R_n$ as defined in the introduction,
\[
 \mathsf C(n,1;n+1,L_n)
 \qquad\text{and}\qquad
 \mathsf C(n,1;R_n,R_n)
\]
hold.
\end{lemma}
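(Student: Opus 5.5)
We follow the proofs of Theorem~\ref{thm:h-benchmark} and Theorem~\ref{thm:c-main} almost verbatim. The one change is in the cubical cover, which must handle $(n,1)$-bodies whose components may be empty. A convex $(n,1)$-body is an $n$-tuple $K=(K_1,\ldots,K_n)$ of intervals, each possibly empty. Strong intersection of a subfamily means that on some track $i$ all $i$-th components share a point. Weak intersection means there are points $x_1,\ldots,x_n$ such that every body contains $x_i$ in its $i$-th component for some $i$. By finiteness we may assume all nonempty components are compact and, after rescaling, lie in $(-1,1)$. Components that are unbounded or not closed are handled by the compactness remarks of Appendix~\ref{app:technical}, or by replacing each interval with a suitable compact subinterval that preserves all the finitely many relevant intersection patterns.

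First we define the cover on $C_n=[-1,1]^n$. For $x\in C_n$ and $v\in\{-1,1\}^n$, say that a body $K$ lies in the generalized orthant $R_x^v$ if, for every $i$, either $K_i=\varnothing$ or $(x_i-y)v_i>0$ for all $y\in K_i$. Set
\[
A_v^j:=\{x\in C_n:\text{some }K\in\F_j\text{ lies in }R_x^v\}.
\]
These sets are open, because for each $K$ the admissible $x$ form an open condition: empty coordinates impose no condition, and nonempty ones impose finitely many strict inequalities on compact intervals.

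Next we check the KKM property. Suppose $x$ lies on a face $G$ and no $A_v^j$ with $v\in V(G)$ contains $x$. Any $K\in\F_j$ missed by $S_x$ lies in $R_x^v$ for the sign vector $v$ chosen coordinatewise as follows. On a nonempty coordinate $i$, $v_i$ is determined by the side of $x_i$ on which $K_i$ lies. On an empty coordinate $i$, $v_i$ is free, and we choose it to agree with $G$ whenever $x_i=\pm1$. On nonempty coordinates with $x_i=\pm1$, the component lies strictly inside $(-1,1)$, so the forced sign already points toward the face containing $x$. Hence $v\in V(G)$, so $x\in A_v^j$, and $\{A_v^j\}$ is a cubical KKM cover whenever $\F_j$ is not weakly intersecting. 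This free choice on empty coordinates is the ``harmless change'' and the only place requiring care. We must make sure the sign conventions put $v$ on the correct face when $x_i=\pm1$; the rescaling into the open cube is what makes that work.

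The cubical pushing lemma then transfers directly. Suppose $K^1,\ldots,K^m$ lie in $R_x^{v_1},\ldots,R_x^{v_m}$ and are strongly intersecting on track $k$. Then every $K^s_k$ is nonempty and contains a common point $a$, so $(x_k-a)v_{s,k}>0$ for all $s$, and the $v_s$ share a facet. To obtain $\mathsf C(n,1;n+1,L_n)$, apply Theorem~\ref{thm:FZ} with $p=0$ and $y_v^j=v$ to get a point $x$ and labels with $0\in\conv\{v_1,\ldots,v_{n+1}\}$. Lemma~\ref{lem:KR} then selects at most $L_n$ distinct labels not in a common facet, and the corresponding witnesses form a rainbow subfamily of size at most $L_n$, which contradicts the pushing lemma. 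To obtain $\mathsf C(n,1;R_n,R_n)$, use Theorem~\ref{thm:pushed-kkms} together with Theorem~\ref{thm:cube-push}, which gives $\pi_{R_n}(C_n)=n/2^{R_n-1}<1$, and argue exactly as in the proof of Theorem~\ref{thm:c-main}.
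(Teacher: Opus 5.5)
Your proposal is correct and follows the paper's proof essentially verbatim: you use the finite interval reduction with per-track rescaling, a compatibility-based cubical cover in which empty components leave the sign free (you choose it so the label stays on the face containing $x$), the same pushing calculation on the strongly intersecting track, and then the benchmark and pushability arguments with $d$ replaced by $n$. The only difference is that you rescale into the open cube, which is harmless but not needed: if $x_i=\pm1$ and $x_i\notin K_i\subseteq[-1,1]$, then the forced sign already equals $x_i$.
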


\begin{proof}
By the finite interval reduction of Appendix~\ref{app:technical}, we may assume that all
nonempty components are compact and, after an affine rescaling on each track, contained
in $[-1,1]$.  Thus only empty components require a change in the cubical cover.  Let
$x\in C_n$ and $v\in\{-1,1\}^n$.  For an
$n$-interval $I=(I_1,\ldots,I_n)$ missed by $x$, say that $I$ is \emph{compatible} with $v$ at
$x$ if, for every nonempty component $I_t$,
\[
 (x_t-y)v_t>0\qquad(y\in I_t);
\]
empty components impose no condition on $v_t$.  Define the cubical KKM sets using this
compatibility relation.  If a point $x$ lies in a face $G$ of the cube and misses an
$n$-interval, one can choose a vertex $v\in V(G)$ compatible with it: the signs on
nonempty components are forced, while empty components may be chosen freely.  Hence the
KKM covering condition remains valid.

Likewise, if selected $n$-intervals are strongly intersecting in component $t$, then all
their $t$-th components are nonempty and contain a common point.  The same calculation as
in Lemma~\ref{lem:pushing} forces all compatible cube labels to have the same $t$-th sign.
Thus the cubical pushing lemma (Lemma \ref{lem:pushing}) remains valid.  The benchmark proof of
Theorem~\ref{thm:h-benchmark} and the pushability proof of Theorem~\ref{thm:c-main}
therefore apply verbatim, with $d$ replaced by $n$.
\end{proof}

\begin{proof}[Proof of Theorem~\ref{thm:KR-tradeoffs}]
Iterating Proposition~\ref{prop:colorful-product} $D-1$ times with the first statement of
Lemma~\ref{lem:KR-1d-inputs} gives
\[
 \mathsf C(n,D;(n+1)^D,L_n^D),
\]
which is part (i).  Iterating with the second statement gives
\[
 \mathsf C(n,D;R_n^D,R_n^D),
\]
and since here the test size equals the number of colors, this is exactly the
complete-rainbow statement in part (ii).
\end{proof}

\begin{remark}
The same proof allows the two one-dimensional inputs to be mixed.  For every
$0\le t\le D$ it gives
\[
 \mathsf C\bigl(n,D;(n+1)^tR_n^{D-t},L_n^tR_n^{D-t}\bigr).
\]
\end{remark}

There is also a reason not to seek a literal higher-dimensional version of the cubical
pushability parameter.  Kaiser and Rabinovich already point out in their Remark~6.3 that
a straightforward vector-valued analogue of the balancing lemma behind the $D=1$
argument cannot recover their polylogarithmic bound: in dimension $2$ they construct a
balanced family for which any subfamily surrounding the origin in every component must
have at least $n$ members \cite{KaiserRabinovich1999}.

\section{Open problems}\label{sec:final}

Beyond the small-dimensional values settled in Section~\ref{sec:dintervals}, our lower
bounds remain essentially constant.  Thus even the qualitative questions
\[
 \text{does }h_d\to\infty?,\qquad \text{does }c_d\to\infty?
\]
are open.  More ambitiously, it is natural to ask whether either parameter has logarithmic
order of growth, as suggested by the upper bounds.

The parameter $c_d$ may be a more approachable first target for a nonconstant lower
bound.  To prove $c_d>r$ one may use $r$ genuinely different color classes and needs to
arrange the transversal condition only for complete rainbow choices.  A lower
bound for $h_d$, by contrast, must control all rainbow subfamilies up to a prescribed
size among $d+1$ colors.  At present we do not know an example which requires $c_d>3$ in any
dimension.

The first unresolved case is already
\[
 c_4\in\{3,4\}.
\]
The pointwise pushability method cannot decide it: Theorem~\ref{thm:cube-push} gives
$\pi_3(C_4)=1$.  Thus proving $c_4=3$ would require information beyond the balanced distributions at one point.

A second family of problems concerns rainbow pushability itself.  It would be useful to
compute $\pi_r(P)$ exactly for further centered polytopes and to understand when the
facet-depth bound of Theorem~\ref{thm:facet-depth} is sharp.  The cube shows that the union bound is sometimes be exact.

The full Kaiser--Rabinovich setting adds a third direction.  Theorem~\ref{thm:KR-tradeoffs}
uses $(n+1)^D$ colors to retain the original local threshold $L_n^D$.  It is natural to
ask how far this color count can be reduced. At the opposite endpoint, the complete-rainbow theorem uses
$R_n^D$ colors. Determining the correct order, or even whether the product bound is close
to optimal for fixed $D>1$, is widely open.

\appendix

\section{Topological and compactness details}\label{app:technical}

\subsection{The vertexwise Sperner--Shapley step}

We give the short verification of Lemma~\ref{lem:face-preserving-selection}.  The
one-dimensional case is immediate, so assume $\dim P\ge2$.  Extend $u\mapsto z(u)$
affinely over every simplex of $T$, obtaining a continuous map $Z:P\to P$.  If $G$ is a
face of $P$, then every triangulation vertex in $G$ has support contained in $G$, and
therefore $Z(G)\subseteq G$.  On the boundary, the straight-line homotopy between $Z$
and the identity remains inside the face containing each point.  Consequently
$Z|_{\partial P}$ has degree one, exactly as in the proof of
\cite[Theorem~2.2]{FrickZerbib2019}.

If an interior point $p$ were omitted by $Z$, radial projection from $p$ onto $\partial P$
would turn $Z$ into an extension over $P$ of this degree-one boundary map.  That is
impossible, since a map of the boundary sphere which extends over the ball has degree
zero.  Hence every interior point lies in the image of $Z$; since $Z(P)$ is compact, it
follows that $Z$ is surjective.  If $Z(x)=p$ and $x$ lies in a simplex $\tau$, then
affinity on $\tau$ gives
\[
 p=Z(x)\in\conv\{z(u):u\in V(\tau)\},
\]
as required.

\subsection{Finite interval reduction}\label{app:finite-reduction}

The finite-family results do not require bounded or closed sets.  We record the elementary
reduction in the slightly more general multitrack form needed in Section~\ref{sec:actual-KR}.
Suppose finitely many bodies have interval components, possibly empty, open, or unbounded,
and suppose a finite collection of subfamilies is required to be strongly intersecting.
For each required strong intersection, choose one witnessing point in one witnessing
component.  Inside every nonempty interval component, retain the convex hull of all chosen
witnesses lying in that component, adding one arbitrary point of the component if none was
chosen.  The resulting nonempty components are compact intervals contained in the original
ones.  Every prescribed strong intersection is preserved, while any weak transversal to
the reduced bodies is also a weak transversal to the original bodies.

Applied to the coordinate projections of a finite family of nonempty connected subsets of
$\R^d$, this shows that the finite versions of Theorems~\ref{thm:h-benchmark} and
\ref{thm:c-main} remain valid without compactness, boundedness, or closedness assumptions.
Indeed, connectedness makes every coordinate projection an interval, and the reduced
intervals define compact boxes to which the proved statements apply.

\subsection{Infinite compact families}\label{app:infinite}

We finally record the common compactness argument for the two orthogonal star theorems.
Let $m=d+1$ for Theorem~\ref{thm:h-benchmark}, or
$m=\lfloor\log_2d\rfloor+2$ for Theorem~\ref{thm:c-main}, and let
$\F_1,\ldots,\F_m$ be arbitrary, possibly infinite, families of compact connected sets
satisfying the corresponding rainbow hypothesis.  Put
$\overline\R=\R\cup\{-\infty,+\infty\}$ and $\operatorname{pr}_i$ as the projection to the $i$-coordinate.  For a compact
set $F\subset\R^d$ define
\[
 C_F:=\{x\in\overline\R^{\,d}:x_i\in\operatorname{pr}_i(F)\text{ for some }i\in[d]\}.
\]
This is closed.  For every finite
$\mathcal G\subseteq\bigcup_{j=1}^m\F_j$, put
\[
 K_{\mathcal G}:=
 \bigcup_{j=1}^m
 \left(\bigcap_{F\in\mathcal G\cap\F_j}C_F\right)\times\{j\}
 \subseteq \overline\R^{\,d}\times[m],
\]
with an empty intersection interpreted as all of $\overline\R^{\,d}$.  If some color is
absent from $\mathcal G$, then $K_{\mathcal G}$ is automatically nonempty.  Otherwise the
corresponding finite theorem applied to the finite restrictions gives a color whose
restricted class has an orthogonal star transversal, and again $K_{\mathcal G}\ne\varnothing$.
Moreover,
\[
 K_{\mathcal G_1}\cap\cdots\cap K_{\mathcal G_s}
 =K_{\mathcal G_1\cup\cdots\cup\mathcal G_s}.
\]
Thus these closed sets have the finite-intersection property.  Compactness gives
$(z,j^*)$ belonging to all of them.  Taking $\mathcal G=\{F\}$ shows that for every
$F\in\F_{j^*}$ some finite coordinate $z_i$ lies in $\operatorname{pr}_i(F)$.  Replacing any infinite
coordinates of $z$ by arbitrary real numbers produces an orthogonal star meeting every
member of $\F_{j^*}$.

\section{The transversal dictionary}\label{app:dictionary}

We prove Proposition~\ref{prop:dictionary}.  Since each $F_j$ is connected, each coordinate
projection $I_i^j=\operatorname{pr}_i(F_j)$ is an interval.  For a coordinate hyperplane
$H_i(a)=\{x:x_i=a\}$ we have
\[
 H_i(a)\cap F_j\ne\varnothing
 \quad\Longleftrightarrow\quad
 a\in I_i^j
 \quad\Longleftrightarrow\quad
 H_i(a)\cap B_j\ne\varnothing.
\]
Hence a subfamily indexed by $S$ has a common axis-parallel hyperplane exactly when
$\bigcap_{j\in S}I_i^j\ne\varnothing$ for some $i$.  Since $S$ is finite, the Helly
property for intervals says that this is equivalent to the intervals
$\{I_i^j:j\in S\}$ being pairwise intersecting, that is, to $S$ being a clique of $G_i$.
This proves the first chain of equivalences.

Likewise, an orthogonal star $H_1(x_1)\cup\cdots\cup H_d(x_d)$ meets $F_j$ exactly when
$x_i\in I_i^j$ for at least one $i$.  Thus such a star exists precisely when
\[
 J=\bigcup_{i=1}^d Q_i,
 \qquad
 Q_i:=\{j:x_i\in I_i^j\}.
\]
Each $Q_i$ is a clique of $G_i$.  Conversely, if cliques $Q_i$ cover $J$, then the finite
Helly property for intervals gives a point $x_i\in\bigcap_{j\in Q_i}I_i^j$ for every
nonempty $Q_i$; choose $x_i$ arbitrarily when $Q_i=\varnothing$.  The resulting
orthogonal star meets every $F_j$.  This proves the second chain and completes the
dictionary.

\section*{Acknowledgements}
The first author was supported by UNAM-PAPIIT project IN119026.  He also acknowledges
support from the UNAM DGAPA-PASPA program for a sabbatical stay at the Instituto de
Matem\'aticas, Unidad Juriquilla, UNAM.  The second author acknowledges support from a
SECIHTI graduate fellowship with CVU 1233555.

The research leading to this work involved an extended interaction with ChatGPT (OpenAI), using GPT-5.6 Sol. The model was used throughout the project for mathematical exploration, development and refinement of proof strategies, preliminary drafting, and computational work. The arguments, formulations, and computations in the paper emerged through repeated collaborative interaction, revision, and correction during this process. The final proofs, computations, and references have been checked by the authors, who assume full responsibility for the mathematical content of the paper, including any errors or bibliographic omissions.

\printbibliography

\end{document}